\newcommand\x{\times}
\newcommand\z{\colorbox{gray}{$\times$}}
\DeclareMathOperator{\rank}{rank}
\DeclareMathOperator{\rk}{rank}
\DeclareMathOperator{\diag}{diag}
\DeclareMathOperator{\blkdiag}{blkdiag}
\newcommand{\tikzmark}[1]{\tikz[overlay,remember picture] \node (#1) {};}
\newcommand{\tikzdrawbox}[3][(0pt,0pt)]{%
    \tikz[overlay,remember picture]{
    \draw[#3]
      ($(left#2)+(-0.3em,0.9em) + #1$) rectangle
      ($(right#2)+(0.2em,-0.4em) - #1$);}
}
\newcommand\TheTitle{Efficient Reduction of Compressed  Unitary plus Low-rank Matrices to Hessenberg form}
\newcommand\TheAuthors{R. Bevilacqua, G.M. Del Corso, L. Gemignani}
\headers{\TheTitle}{\TheAuthors}
\title{\TheTitle\thanks{The research of the last two authors was partially supported by GNCS project ``Tecniche innovative per
problemi di algebra lineare'' and by the project sponsored by  University of Pisa
under the grant PRA-2017-05.}}
\author{R. Bevilacqua\thanks{Dipartimento di Informatica, Universit\`a
    di Pisa, Pisa, Italy, \email{roberto.bevilacqua@unipi.it}} \and  G.M. Del Corso\thanks{Dipartimento di Informatica, Universit\`a
    di Pisa, Pisa, Italy, \email{gianna.delcorso@unipi.it}} \and   L. Gemignani\thanks{Dipartimento di Informatica, Universit\`a
    di Pisa, Pisa, Italy, \email{luca.gemignani@unipi.it}}}
\begin{document}

\maketitle

\begin{abstract}
We present fast  numerical methods for computing the Hessenberg reduction of a unitary plus low-rank matrix
$A=G+U V^H$, where $G\in \mathbb C^{n\times n}$ is a unitary matrix represented  in some compressed format using
$O(nk)$  parameters and $U$ and $V$ are $n\times k$ matrices with $k< n$. At the core of these methods  is
a certain structured decomposition, referred to as a LFR decomposition,  of $A$ as product of three possibly perturbed  unitary $k$
Hessenberg matrices of size $n$. It is shown that in most interesting cases  an initial LFR decomposition of $A$ can be computed  very cheaply.
Then we prove structural properties of LFR  decompositions  by giving conditions under which the LFR decomposition of $A$   implies  its Hessenberg  shape.
Finally, we  describe a bulge chasing scheme for  converting  the initial LFR decomposition of $A$ into the LFR decomposition of a Hessenberg matrix
by means of unitary transformations.
The  reduction can be performed at the overall computational cost of $O(n^2 k)$ arithmetic operations using $O(nk)$  storage.
The computed LFR decomposition of the Hessenberg reduction of $A$  can be processed by the fast QR algorithm presented in~\cite{BDG}
in order to compute  the eigenvalues of $A$  within the same costs.
    \end{abstract}

\begin{keywords}
Hessenberg reduction, Rank-structured  matrices,  QR Method, Bulge chasing, CMV matrix,  Complexity.
\end{keywords}

\begin{AMS}
 65F15  
\end{AMS}
\pagestyle{myheadings}
\thispagestyle{plain}
\markboth{R.~BEVILACQUA, G.~M.~DEL CORSO, L.~GEMIGNANI}{ Efficient Data-Sparse Representations of Unitary plus Low-Rank Matrices}

\section{Introduction}

Eigenvalue computations for  small rank modifications   of
unitary matrices represented in some compressed format  is a classical topic in structured numerical linear algebra.
Matrices of the form  $A = D + UV^H$ where $D$ is a unitary  $n \times n$ 
  block  diagonal matrix and 
  $U, V \in \mathbb{C}^{n \times k}$, $k< n$,  arise commonly in the numerical treatment of 
  structured (generalized) eigenvalue problems \cite{ACL,AG}. In particular any unitary plus low-rank matrix
  can be reduced in this form  by a  similarity (unitary) transformation and additionally matrices of this
  form can be directly generated by linearization techniques based on  interpolation schemes
  applied for the solution of nonlinear eigenvalue problems \cite{Austin,ito,BR,noi}.
  The class of  unitary block  upper Hessenberg matrices perturbed in the first block row or in the last block column  includes  
  block companion linearizations of matrix polynomials.  These matrices  are also related with
  computational problems involving orthogonal
  matrix polynomials on the unit circle \cite{sinap1,sinap2}. Constructing  the sequence of orthogonal polynomials   w.r.t a
  different  basis  modifies the compressed format  of the  unitary part by replacing the block Hessenberg shape with the block
  CMV shape \cite{cantero2003five,killip2007cmv,simon}. Semiinfinite block  upper Hessenberg  and CMV  unitary matrices are
  commonly used to represent unitary operators on a separable Hilbert space \cite{arli,dam}.
  Finite truncations of these matrices  are
  unitary block  Hessenberg/CMV matrices  modified in the  last row  or column. 
  
  In most numerical methods Hessenberg  reduction by unitary similarity transformations is the first step  towards eigenvalue
  computation. Recently  a  fast reduction algorithm specifically tailored for block companion matrices
  has been presented in \cite{AMRVW_block} whereas some  efficient  algorithms for dealing with
  block unitary diagonal plus small rank matrices  
  have been developed  in \cite{GR}.  In particular,   these latter algorithms are two-phase:
  in the first phase the matrix $A$ is reduced in
  a banded form $A_1$  employing  a block CMV-like format to represent the unitary part.  The second phase amounts 
  to incrementally annihilate the  lower subdiagonals of $A_1$  by means of Givens rotations  which are gathered in order
  to construct  a data-sparse  compressed representation of
  the  final Hessenberg matrix $A_2$.  The representation involves $O(nk)$  data storage  consisting of
  $O(n)$ vectors of length
  $k$ and $O(nk)$  Givens rotations.  This compression  is usually known as 
   a Givens--Vector representation \cite{vanbarel:book1,vanbarel:book2}, 
   and  it can  also be   explicitly  resolved to   produce a
   generators-based representation \cite{eidelman:book1,eidelman:book2}.  However,
   a major weakness of this  approach is that   both  these
    two compressed formats   are  not suited to be exploited  in the design of 
  fast  specialized eigensolvers for unitary plus low rank matrices using $O(n^2 k)$ ops only. 

  In this paper  we  describe a novel $O(n^2 k)$ backward stable algorithm for computing the Hessenberg reduction of general  matrices
  $A\in \mathbb C^{n\times n}$  of the form $A = G + UV^H$, where  $G$ is  unitary block diagonal or
 unitary block upper Hessenberg or block CMV   with block size $k<n$ and,  in the case $G$ is  unitary block upper Hessenberg or block CMV, we have the additional requirement that  $U=\left[I_k,0\ldots, 0\right]^T$. These families include most of the important cases arising in applications.

   This  algorithm  circumvents  the  drawback   of the method proposed in \cite{GR}
  by  introducing  a different  data-sparse  compressed representation of
  the  final Hessenberg matrix which is effectively  usable in fast eigenvalue   schemes. In particular,
  the representation  is   suited   for the fast
eigensolver for unitary plus low rank matrices  developed in \cite{BDG}. Our derivation is
  based on three  key ingredients or building blocks:
  \begin{enumerate}
  \item A condensed representation of the matrix $A$ (or of a matrix unitary similar to $A$) which can be specified as $A=L( I + (e_1 \otimes I_k)Z^H)R=LFR$, 
where $L$ is the product of $k$ unitary lower Hessenberg matrices, $R$
is the product of
$k$  unitary upper Hessenberg matrices and the middle factor $F$  is  the identity  matrix perturbed
in the first $k$ rows. 

In the case matrix $G$ is block upper Hessenberg or block diagonal we can obtain the $LFR$ representation in a simple way that we clarify in Section~\ref{Hess} and~\ref{Diag}. 
In the case $G$ is unitary block CMV matrix we provide  
a   suitable   extension of the well known factorization of CMV matrices as product of two block diagonal
  unitary matrices that are   both the  direct sum of $2\times 2$ or $1\times 1$ unitary  blocks (compare  with 
  \cite{simon}   and the references given therein).    Specifically, block CMV matrices with blocks of size $k$  are $2k$-banded
  unitary matrices  allowing a 'staircase-shaped'  profile. It is shown that a block CMV matrix with blocks of size $k$
  admits a factorization  as product of two unitary  block  diagonal matrices with $k\times k$   diagonal  blocks.
  It follows that the  block CMV matrix can be  decomposed as the product of a unitary lower $k-$Hessenberg matrix
  multiplied by a unitary upper $k-$Hessenberg matrix. 
\item  An embedding  technique which  for a given triple $(L,F, R)$ associated with $A$ 
  makes it possible to construct  a larger matrix $\widehat A\in \mathbb C^{(n+k)\times (n+k)}$
  which is  still unitary plus rank$-k$ and  it  can be  factored  as
  $\widehat A=\widehat L\cdot \widehat F\cdot \widehat R$, where $\widehat L$ is the product of $k$
  unitary lower Hessenberg matrices, $\widehat R$
  is the product of $k$  unitary upper Hessenberg matrices and the middle factor $\widehat F$
  is   unitary  block diagonal plus rank$-k$ with   some additional properties.
\item  A theoretical result which provides  conditions under which a matrix  specified  in the form
  $\widehat A=\widehat L\cdot \widehat F\cdot \widehat R$  turns out to be Hessenberg. 
  \end{enumerate}
  Combining  together  these ingredients   allows the
  design  of  a specific bulge-chasing  strategy for converting the $ L F R$  factored representation of
  $\widehat A$  into the $ L  F R$   decomposition of an  upper Hessenberg matrix
  $\widetilde A$ unitarily similar to $\widehat A$.   
The  final  representation  of $\widetilde A$  thus involves $O(nk)$  data storage  consisting of
$O(k)$ vectors of length
$n$ and $O(nk)$  Givens rotations. The reduction to Hessenberg form turns out to have the same asymptotic complexity of eigensolvers for unitary plus low rank matrices and furthermore, this representation  is  suited to be used directly by   the fast
eigensolver for unitary plus low rank matrices  developed in \cite{BDG}.

The paper is organized as follows. In Section \ref{one} we introduce the  $LFR$ representations of unitary
plus rank$-k$ matrices by   devising  fast algorithms for transforming a  matrix $A$ into its $LFR$ format provided that $A$
belongs to some special classes.
In Section \ref{two} we investigate  the properties of $LFR$ representations of unitary plus rank$-k$
Hessenberg matrices and we describe a suitable technique to embed the matrix $A$ into a larger matrix $\widehat A$
by mantaining its structural properties. In Section \ref{three} we present our algorithm  which modifies 
the $LFR$ representation of $\widehat A$  by computing  the corresponding 
$LFR$ representation of a unitarily similar Hessenberg matrix. Finally,  numerical experiments are discussed in
Section \ref{four} whereas  conclusions  and future work are drawn in Section \ref{sec:conclusions}.

\section{The $LFR$ Format  of Unitary plus Rank-$k$ Matrices}\label{one}
In this section we introduce a suitable compressed representation of unitary plus rank-$k$ matrices which can be exploited
for the design of fast Hessenberg reduction algorithms.

\begin{definition}\label{lfr}
		A unitary plus rank-$k$ matrix $A\in \mathbb C^{n\times n}$ can be represented in the LFR format if there is a triple $(L,F,R)$ of matrices  such that:
		\begin{enumerate}
			\item $A=LFR$;
			\item $L\in \mathbb C^{n\times n}$ is the product of $k$ unitary lower Hessenberg matrices;
			\item $R\in \mathbb C^{n\times n}$ is the product of $k$ unitary upper  Hessenberg matrices;
			\item $F=Q+[I_k, 0]^T Z^H\in \mathbb C^{n\times n}$  is  a unitary plus rank$-k$ matrix, where  $Q$  is a block diagonal
			unitary matrix of the form
			$Q=\left[\begin{array}{c|c}
			I_k & \\ \hline & \hat Q
			\end{array}\right] $, with $\hat Q$  unitary
			Hessenberg and  $Z\in \mathbb C^{n\times k}$.
			
		\end{enumerate}
	\end{definition}

In the sequel of this section  we present some fast algorithms  for computing the $LFR$ format of  a unitary  plus
rank-$k$ matrix $A\in \mathbb C^{n\times n}$ specified as follows:
\begin{itemize}
\item $A=G + [I_k, 0]^T Z^H$,   $Z\in \mathbb C^{n\times k}$, and
  $G$ is unitary  block CMV   with block size $k<n$;
  \item $A=H + [I_k, 0]^T Z^H$,   $Z\in \mathbb C^{n\times k}$, and
    $H$ is unitary  block  upper Hessenberg  with block size $k<n$;
  \item $A=D + U V^H$,   $U, V\in \mathbb C^{n\times k}$, and
    $D$ is unitary  block  diagonal  with block size $k<n$.
\end{itemize}
These three cases cover the most interesting structures of low-rank perturbation of unitary matrices. In the general case of unitary matrices, where it is not known the spectral factorization of the unitary part or the unitary matrix cannot be represented in terms of a linear number of parameters,  we cannot expect to recover the eigenvalues -- even only of the unitary part -- in $o(n^3)$. 


In the following sections we investigates into the above three cases. 
\subsection{Small Rank Modifications of Unitary Block CMV Matrices}\label{CMV}
A block analogue of the CMV form of unitary matrices has been introduced in  \cite{GR,arli}.
\begin{definition}[CMV shape]\label{cmvshape}
  A unitary matrix $G\in \mathbb C^{n\times n}$ is said to be \emph{CMV structured with block size $k$}
  if there exist $k \times k$  non-singular 
  matrices $R_i$ and $L_i$, respectively upper and lower triangular,
  such that 
  \begin{equation} \label{gshape}
    G  = 
    \begin{bmatrix}
          \times & \times & L_3  \\
          R_1 & \times & \times \\
            & \times & \times & \times & L_5\\
            & R_2 & \times & \times & \times \\
            &     &        & \times & \times \\
            &     &        & R_4    & \times & \ddots \\
            &     &        &        & \ddots & \ddots \\
        \end{bmatrix}
        \end{equation}
        or
        \[
     G = 
    \begin{bmatrix}
    	\times & L_2    &  \\
    	\times & \times & \times & L_4    &  \\
    	R_1    & \times & \times & \times \\
    	       &        & \times & \times & \times &L_6    \\
    	       &        & R_3    & \times & \times & \times & \ddots\\
    	       &        &        &        & &\ddots & \ddots
    \end{bmatrix}    
  \]
  where the symbol $\times$ has been used to identify 
  (possibly) nonzero $k\times k$ blocks. 
\end{definition}

Block CMV matrices are associated with matrix  orthogonal polynomials on the unit circle and
the structure of the matrix depends on the
choice of the starting basis of the set of matrix polynomials to be orthogonalized.  In particular,  
$G$ fits  the block structure shown in Definition~\ref{cmvshape}  if $\left \{I_k, zI_k, z^{-1} I_k,\ldots\right\}$ or
$\left \{I_k, z^{-1}I_k, z I_k,\ldots\right\}$ are considered.  In what follows  for the sake of simplicity we always assume that
$G$  satisfies  the  block structure~\eqref{gshape}.  Furthermore,
in order to simplify the notation we often assume that
$n$ is a multiple
of $2k$, so the above structures fit ``exactly'' in the matrix. However,
this is not restrictive  and the theory presented here continues
to hold in greater generality. In practice, one can deal with the
more general case by allowing the blocks in the bottom-right corner
of the matrix to be smaller.

Notice that a matrix in CMV form with blocks of size $k$ is, 
in particular, $2k$-banded. The CMV structure with blocks
of size $1$ has been proposed as a generalization of what the
tridiagonal structure is for Hermitian matrices in 
\cite{cantero2003five} and \cite{killip2007cmv}. A further analogy between the
scalar and the block case is 
 derived from the Nullity Theorem \cite{FM} that is here  applied to unitary matrices. 

 \begin{lemma}[Nullity Theorem]\label{l1}
Let $U$ be a unitary matrix of size $n$. Then 	
$$
\rank(U(\alpha, \beta))=\rank(U(J\backslash \alpha,J\backslash \beta))+|\alpha|+|\beta|-n
$$
where $J=\{1, 2, \ldots, n\}$ and $\alpha$ and $\beta$ are subsets of $J$.
If $\alpha=\{1, \ldots, h\}$ an $\beta=J\backslash \alpha$ we have
$$
\rank(U(1:h, h+1:n))=\rank(U(h+1:n, 1:h)),\quad  \mbox{ for all } h=1, \ldots, n-1.
$$
\end{lemma}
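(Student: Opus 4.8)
The plan is to derive the rank identity from a kernel–dimension identity for complementary submatrices, using only that $U$ is an isometry with $U^{-1}=U^H$ and that orthogonal complements of coordinate subspaces are again coordinate subspaces; the general case requires no permutation reduction. Throughout, for a subset $S\subseteq J$ I write $V_S=\mathrm{span}\{e_i : i\in S\}\subseteq\mathbb C^n$, so that $\dim V_S=|S|$ and $V_S^{\perp}=V_{J\setminus S}$.

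First I would translate submatrix kernels into intersections of subspaces. Given $x\in\mathbb C^{|\beta|}$, identify it with $\tilde x=\sum_{j\in\beta}x_j e_j\in V_\beta$; then for $i\in\alpha$ the entry $(U\tilde x)_i$ is precisely the corresponding entry of $U(\alpha,\beta)x$, so $U(\alpha,\beta)x=0$ if and only if $U\tilde x\in V_{J\setminus\alpha}$. Since $x\mapsto\tilde x$ is an isomorphism $\mathbb C^{|\beta|}\to V_\beta$, this gives $\dim\ker U(\alpha,\beta)=\dim\bigl(V_\beta\cap U^{-1}V_{J\setminus\alpha}\bigr)=\dim\bigl(V_\beta\cap U^{H}V_{J\setminus\alpha}\bigr)$, the last equality because $U$ is invertible with $U^{-1}=U^H$. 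Replacing $\alpha,\beta$ by their complements yields the companion formula $\dim\ker U(J\setminus\alpha,J\setminus\beta)=\dim\bigl(V_{J\setminus\beta}\cap U^{H}V_{\alpha}\bigr)$.

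Next I would combine these with two elementary identities, $\dim(X\cap Y)=\dim X+\dim Y-\dim(X+Y)$ and $\dim(X+Y)=n-\dim(X^{\perp}\cap Y^{\perp})$. Applying them with $X=V_\beta$ and $Y=U^HV_{J\setminus\alpha}$, and using $V_\beta^{\perp}=V_{J\setminus\beta}$ together with $(U^HW)^{\perp}=U^H(W^{\perp})$ (valid because $U^H$ is unitary), the term $\dim(X^{\perp}\cap Y^{\perp})$ becomes $\dim\bigl(V_{J\setminus\beta}\cap U^HV_\alpha\bigr)=\dim\ker U(J\setminus\alpha,J\setminus\beta)$, and bookkeeping gives
\[
\dim\ker U(\alpha,\beta)=|\beta|-|\alpha|+\dim\ker U(J\setminus\alpha,J\setminus\beta).
\]
Since $U(\alpha,\beta)$ has $|\beta|$ columns and $U(J\setminus\alpha,J\setminus\beta)$ has $n-|\beta|$ columns, rank–nullity turns this into $\rank U(\alpha,\beta)-\rank U(J\setminus\alpha,J\setminus\beta)=|\alpha|+|\beta|-n$, which is the first assertion. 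The second assertion is the special case $\alpha=\{1,\dots,h\}$, $\beta=J\setminus\alpha$: then $|\alpha|+|\beta|=n$, the correction term vanishes, and $U(\alpha,\beta)=U(1:h,h+1:n)$ while $U(J\setminus\alpha,J\setminus\beta)=U(h+1:n,1:h)$.

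The argument is essentially all bookkeeping, so there is no serious obstacle, only points where care is needed: keeping straight which of the two index sets is complemented in the subspace picture, remembering that the relevant rank–nullity balance uses the number of \emph{columns} $|\beta|$ rather than $|\alpha|$, and noting that the sole place where unitarity is used beyond plain invertibility is the identity $(U^HW)^{\perp}=U^HW^{\perp}$. An alternative, even shorter route is to quote the classical nullity theorem of Fiedler and Markham~\cite{FM} for the nonsingular matrix $U$ and then pass to $U^{-1}=U^H$ using $\rank U^H(\gamma,\delta)=\rank U(\delta,\gamma)$; the derivation above has the advantage of being self-contained.
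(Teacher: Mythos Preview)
Your proof is correct. The kernel--subspace identification, the dimension identity $\dim(X\cap Y)=\dim X+\dim Y-n+\dim(X^\perp\cap Y^\perp)$, and the use of $(U^HW)^\perp=U^HW^\perp$ all check out, and the final rank--nullity bookkeeping is clean.

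As for comparison with the paper: there is nothing to compare. The paper does not prove Lemma~\ref{l1}; it states it as a known result, citing Fiedler and Markham~\cite{FM}, and then applies it. You have supplied a self-contained proof where the authors chose to quote the literature---and you even note this alternative route yourself at the end of your write-up. So your approach is not so much \emph{different} from the paper's as it is \emph{more} than the paper's: you fill in a step the authors deliberately outsourced.
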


 From Lemma \ref{l1} applied  to   a block CMV structured matrix $G$ of block size $k$   we find that for 
 $p>0$:
\[
0=\rk\left(G(1:2pk, (2p+1)k+1: n)\right)=\rk\left(G(2pk+1:n, 1:(2p+1)k)\right)-k
\]
which gives 
\[
\rk\left(G(2pk+1:2(p+1)k, (2p-1)k+1:(2p+1)k)\right)=k.
\]
Pictorially we are observing  rank  constraints  on the following blocks
\[
 G = \begin{bmatrix}
          \times & \times & L_3  \\
          R_1 & \times & \times \\
            & \tikzmark{left1}\z & \z & \times & L_5\\
            & R_2 & \z  \tikzmark{right1}& \times & \times \\
            &     &        & \tikzmark{left2}\z & \z \\
            &     &        & R_4    & \z  \tikzmark{right2} & \ddots \\
            &     &        &        & \ddots & \ddots \\
    \end{bmatrix}
    \tikzdrawbox{2}{thick,green}
    \tikzdrawbox{1}{thick,green}
  \]
and by  similar arguments on   the corresponding blocks in the  upper triangular portion. 

In the scalar case   with $k=1$ these  conditions make it possible to find
a factorization of the  CMV matrix as product of two  block diagonal matrices
usually referred to as the classical Schur parametrization \cite{BGE}.  
Similarly, here we  introduce a block  counterpart of  the Schur 
parametrization which gives a useful tool to  encompass the structural properties of  block CMV  
representations. 

\begin{lemma}[CMV factorization]\label{lem:cmv-fact}
  Let $G$ be a unitary CMV structured matrix  with blocks
  of size $k$ as defined in Definition \ref{cmvshape}.  Then $G$  can be factored
  in two block diagonal unitary matrices
  $G = G_1 G_2$ of the form:
  \[
    G_1 = \diag(G_{1,1}, \ldots, G_{1,s}), \qquad 
    G_2 = \diag(I_k, G_{2,2}, \ldots, G_{2,s+1})
  \]
  such that
   $G_{2,s+1}$ has $k$ rows 
  and columns and all the other blocks 
  $G_{i,j}$ have $2k$ rows and columns and 
  bandwidth $k$ with  both $G_{i,j}(k+1:2k, 1:k)$ and $G_{i,j}(1:k, k+1:2k)$
  triangular  matrices of full rank. Moreover, 
  each matrix  $G$ admitting such  a factored  form is in turn CMV. 
\end{lemma}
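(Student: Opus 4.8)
The plan is to prove the two implications separately: first that a block CMV matrix $G$ of block size $k$ admits the claimed block-diagonal factorization $G = G_1 G_2$, and then conversely that any product of that form is CMV.

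For the forward direction I would argue constructively, peeling off the blocks of $G_1$ and $G_2$ one $2k$-window at a time, working down the staircase. The key observation is that the rank constraints derived from the Nullity Theorem (Lemma~\ref{l1}) — namely that each block $G(2pk+1:2(p+1)k,\ (2p-1)k+1:(2p+1)k)$ has rank exactly $k$, together with the triangularity of the $R_i$ and $L_i$ — force the overlap structure. Concretely, I would look at the leading principal part: the first block column of $G$ has only the $(1:2k, 1:k)$ portion nonzero (by the CMV shape), so $G(1:2k,1:k)$ has rank $k$; I want to choose the first diagonal block $G_{1,1}\in\mathbb C^{2k\times 2k}$ unitary so that $G_{1,1}^H$ applied on the left of the first block-row-pair clears the appropriate entries and exposes $I_k$ in the $G_2$ factor's leading block. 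Since $G_2$ is forced to start with $I_k$, the first $k$ columns of $G$ must already lie in the column space of $G_{1,1}\oplus I_{n-2k}$ restricted appropriately; the triangular full-rank conditions on $R_1$ pin down $G_{1,1}$ up to the freedom that gets absorbed into $G_{2,2}$. Then I induct: having fixed $G_{1,1},\dots,G_{1,j}$ and $G_{2,2},\dots,G_{2,j}$, the remaining trailing matrix $\big(\bigoplus G_{1,i}\big)^H G \big(\bigoplus G_{2,i}\big)^H$ is again CMV-structured of smaller size in its trailing corner, and I repeat. The triangularity and full-rank requirements on $G_{i,j}(k+1:2k,1:k)$ and $G_{i,j}(1:k,k+1:2k)$ should come out of the fact that these blocks inherit, respectively, the $R$-type and $L$-type triangular blocks of $G$ via the QR/RQ-like step used to define each $G_{i,j}$.

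For the converse, I would simply multiply out $G = G_1 G_2$ with $G_1 = \diag(G_{1,1},\dots,G_{1,s})$ and $G_2 = \diag(I_k, G_{2,2},\dots,G_{2,s+1})$, where the $G_{1,i}$ are aligned to the block partition $2k, 2k, \dots$ and the $G_{2,j}$ are aligned to the shifted partition $k, 2k, 2k, \dots, k$. Because the two block-diagonal patterns are offset by $k$, the product has the staircase band profile of width $2k$ shown in~\eqref{gshape}; a direct computation of which $k\times k$ blocks of $G$ can be nonzero gives exactly the CMV sparsity pattern. The triangularity of the corner blocks $G(2pk+1:2(p+1)k,\ (2p-1)k+1:(2p+1)k)$ — i.e.\ the $R_i$'s being upper triangular — follows because that block equals $G_{1,p+1}(1:k, k+1:2k)\cdot\text{(identity-like coupling)}\cdot G_{2,p+1}(k+1:2k,1:k)$, a product of the prescribed triangular full-rank factors, hence triangular and nonsingular; the $L_i$'s are handled symmetrically. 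Unitarity of $G$ is immediate since it is a product of unitaries.

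The main obstacle I expect is the forward direction's bookkeeping: showing that the greedy/inductive choice of each $G_{1,i}$ can be made \emph{unitary} and \emph{$2k\times 2k$} (not leaking outside the window) and that the leftover after one step is genuinely CMV of the same type with a clean $I_k$ appearing where Lemma~\ref{lem:cmv-fact} demands. This hinges on using the rank-$k$ conditions in exactly the right place: the vanishing blocks $G(1:2pk,\ (2p+1)k+1:n)=0$ guarantee that no "mass" from later columns interferes, so the window-by-window peeling is well defined. I would also need to check the edge cases at the top-left (where $G_2$ contributes only $I_k$, so $G_{1,1}$ alone must reproduce $G(1:2k,1:2k)$'s first block column) and at the bottom-right (where $G_{2,s+1}$ is the leftover $k\times k$ unitary block and $G_{1,s}$ may be truncated), which is where the earlier remark about allowing smaller trailing blocks when $n$ is not a multiple of $2k$ becomes relevant.
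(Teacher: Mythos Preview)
Your approach is essentially the paper's: a constructive, window-by-window block QR on successive $2k\times k$ block columns of $G$, with the Nullity Theorem rank-$k$ constraint doing exactly the work you describe (when you zero the first $k$ columns of a $2k\times 2k$ window, the rank condition forces the adjacent $k$ columns in that row-window to vanish as well, and unitarity then clears the corresponding upper blocks). Your converse by direct multiplication is fine and in fact more explicit than what the paper writes.

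There is, however, one concrete gap in your triangularity claim. After the block QR of the leading $2k\times k$ column, the sub-diagonal corner $G_{1,1}(k+1\!:\!2k,\,1\!:\!k)$ is indeed upper triangular of full rank, inherited from $R_1$ exactly as you say. But the super-diagonal corner $G_{1,1}(1\!:\!k,\,k+1\!:\!2k)$---the $Q_{1,2}$ block of the QR unitary---is \emph{not} automatically lower triangular; a QR step on a block column constrains the $R$ factor, not the complementary block of $Q$. The same issue propagates to the $L$-type corners of the $G_{2,j}$. The paper fixes this with a second pass: after obtaining $G=\widehat G_1\widehat G_2$ from the block-QR sweep, it inserts a block-diagonal unitary scaling $D=\diag(I_k,D_2,I_k,D_4,\ldots)$, setting $G_1=\widehat G_1 D$ and $G_2=D^H\widehat G_2$, where each $D_{2j}$ comes from a QR factorization $Q_{1,2}^H=QR$ so that $Q_{1,2}D_{2}=R^H$ is lower triangular; the identity $L_3=Q_{1,2}D_2\,(G_2)_{2,3}$ then forces $(G_2)_{2,3}$ to be lower triangular as well, and so on down the staircase. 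Your sentence ``should come out \ldots\ via the QR/RQ-like step'' skips this; without the scaling the bandwidth-$k$ shape of the $G_{i,j}$ is not achieved.
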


\begin{proof}
  The proof of this result is constructive, and can be obtained by
  performing  a block QR decomposition. We notice that if we 
  compute a QR decomposition of the top-left $2k \times k$ block of 
  $G$ we have
  \[
    \begin{bmatrix}
      Q_{1,1} & Q_{1,2} \\
      R_{2,1} & Q_{2,2} \\ 
      && I
    \end{bmatrix}^H \begin{bmatrix}
              \times & \times & L_3  \\
              R_1 & \times & \times \\
                & \times & \times & \times & L_5\\
                & R_2 & \times & \times & \times \\
                &     &        & \times & \times \\
                &     &        & R_4    & \times  & \ddots \\
                &     &        &      & \ddots & \ddots
            \end{bmatrix} = \begin{bmatrix}
	            \tilde \times &   \\
	              & \tilde \times & \tilde \times \\
	              & \times & \times & \times & L_5\\
	              & R_2 & \times & \times & \times \\
	              &     &        & \times & \times  \\
	              &     &       &  R_4    & \times  & \ddots\\
                      &     &        &      & \ddots & \ddots
            \end{bmatrix}
  \]
  where $\tilde \times$ identifies the blocks that have been altered
  by the transformation and  the block in position $(1,1)$ can be assumed to be the identity matrix.
  Notice that in the first row the
  blocks in the second and third columns have to be zero due to $G$ being unitary, and that the 
$R_{2,1}$ block is nonsingular  upper triangular since it inherits the properties of 
   $R_1$. 
  
  We can continue this process by 
  computing the QR factorization of $\left[ \begin{smallmatrix}
    \times \\
    R_2 
  \end{smallmatrix} \right]$. 
Notice that,
from  the application   of the Nullity Theorem  \ref{l1} the block identified 
  by $\left[ \begin{smallmatrix}
      \times & \times \\
      R_2    & \times \\ 
    \end{smallmatrix} \right]$ 
in the picture has rank at most $k$. This also holds for all the other
    blocks for the same kind. In particular, computing the QR factorization of the
    first $k$ columns and left-multiplying by $Q^H$ will put to 
    zero also the block on the right of $R_2$. We will then get
    the following factorization: 

{\scriptsize{
    \[
    \begin{bmatrix}
      Q_{1,1} & Q_{1,2} \\
      R_{2,1} & Q_{2,2} \\ 
      && Q_{3,3} & Q_{3,4} \\
      && R_{4,3} & Q_{4,4} \\
      &&&& I \\
    \end{bmatrix}^H \begin{bmatrix}
              \times & \times &L_3  \\
              R_1 & \times & \times \\
                & \times & \times & \times & L_5\\
                & R_2 & \times & \times & \times \\
                &     &        & \times & \times \\
                &     &        & R_4    & \times & \ddots\\
                &     &        &      & \ddots & \ddots

            \end{bmatrix} = 
\begin{bmatrix}
	            \tilde \times &   \\
	              & \tilde \times & \tilde \times \\
	              & \tilde \times & \tilde \times & & \\
	              & &  & \tilde \times & \tilde\times \\
	              &     &        & \times & \times \\
	              &     &        & R_4    & \times & \ddots\\
                      &     &        &      & \ddots & \ddots

            \end{bmatrix} 
    \]
}}where we notice that, as before, the block $R_{4,3}$ is  nonsingular upper triangular 
    and that some blocks in the upper part have been set to zero thanks
    to the unitary property. The process can then be continued until
    the end of the matrix, providing  a factorization of $G$ as product of two unitary block diagonal matrices, that is
    $G= \widehat G_1 \widehat G_2$.  This factorization can  further be simplified  by means of a  block diagonal scaling
    $G=( \widehat G_1 D) (D^H \widehat G_2)=G_1 G_2$ with  $D=\diag(D_1, \ldots, D_{2s})$, $D_{2j-1}=I_k$ and
    $D_{2j}$ $k\times k$ unitary matrices determined so that  the blocks $G_{i,j}$ are  of  bandwidth $k$, that is the outermost blocks in $G_1$ and $G_2$ are triangular.  For the sake
    of illustration consider $j=1$ and  let  $Q_{1,2}^H=Q R$  be a QR decomposition of $Q_{1,2}^H$.  By setting
    $D_2=Q$ we obtain that $Q_{1,2}D_2=R^H$ and, moreover, from $L_3=Q_{1,2}D_2 (G_2)_{2,3}=R^H(G_2)_{2,3}$  it follows that
    the  block of $G_2$ in position $(2,3)$ also exhibits a lower triangular structure.  The  construction of the
    remaining  blocks  $D_{2j}$, $j>1$, proceeds in a similar way. 
\end{proof}

Pictorially, the above result gives the following 
structure of $G_1$ and $G_2$: 
\[
  G_1 = \begin{bmatrix}
    \begin{tikzpicture}
      \foreach \x in {1, ..., 3} {
        \draw (\x,4-\x) -- (\x+.5,4-\x) -- (\x+1,3.5-\x) --(\x+1,3-\x)
          -- (\x+0.5,3-\x) -- (\x,3.5-\x) -- (\x,4-\x);
      }
    \end{tikzpicture}
  \end{bmatrix}, \qquad 
  G_2 = \begin{bmatrix}
    \begin{tikzpicture}
      \draw (0,4) -- (.5,4) -- (.5,3.5) -- (0,3.5)
         -- (0,4);
      \foreach \x in {.5,1.5} {
       \draw (\x,4-\x) -- (\x+0.5,4-\x) -- (\x+1,3.5-\x) --(\x+1,3-\x)
                -- (\x+.5,3-\x) -- (\x,3.5-\x) -- (\x,4-\x);
      }
      \draw (2.5,1.5) -- (3,1.5) -- (3,1) -- (2.5,1) -- (2.5,1.5);
    \end{tikzpicture}
  \end{bmatrix}
\]

Now, let us assume that a matrix $A\in \mathbb C^{n\times n}$  is  such that $A=G^T + 
[I_k, 0]^T Z^H$,   $Z\in \mathbb C^{n\times k}$,  and
$G$ is unitary  block CMV   with block size $k<n$.  By replacing $G$ with its block diagonal
factorization we obtain that $A=G_2^T(I_n+[I_k, 0]^T Z^H \bar G_1)G_1^T$.
Since the left-hand and the right-hand side matrices  are unitary $k-$banded it follows that  they can both  be factored
as the product of $k$ unitary Hessenberg matrices.   Hence, we have the following. 

\begin{theorem}\label{block-cmv-red}
Let $A\in \mathbb C^{n\times n}$  be   such that $A=G^T + 
[I_k, 0]^T Z^H$,   $Z\in \mathbb C^{n\times k}$, and
$G$ is unitary  block CMV   with the block structure shown in Equation~\ref{gshape}.  Then
$A$ can be represented in the LFR format as $A=G_2^T(I_n+[I_k, 0]^T \widehat Z^H)G_1^T$  where
$L=G_2^T$, $R=G_1^T$, $G=G_1 G_2$ is the decomposition  provided  in  Lemma \ref{lem:cmv-fact}
and $F=I_n+[I_k, 0]^T \tilde Z^H$, $ \tilde Z^H=Z^H \bar G_1$.
\end{theorem}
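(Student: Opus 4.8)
The plan is to substitute the factorization $G=G_1G_2$ provided by Lemma~\ref{lem:cmv-fact} into $A=G^T+[I_k,0]^TZ^H$, transpose, and then extract the two outer factors, leaving a middle factor of the prescribed shape. Since $G^T=(G_1G_2)^T=G_2^TG_1^T$ is a product of two $k$-banded unitary matrices, we may write
\[
A\;=\;G_2^T\Bigl(I_n+G_2^{-T}[I_k,0]^TZ^HG_1^{-T}\Bigr)G_1^T ,
\]
and it then remains to identify the middle factor with $F=I_n+[I_k,0]^T\tilde Z^H$ and to check that $L:=G_2^T$ and $R:=G_1^T$ meet conditions (2) and (3) of Definition~\ref{lfr}.

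The only algebraic manipulation is the simplification of the middle factor. As $G_1$ is unitary, $G_1^{-T}=(G_1^H)^T=\bar G_1$, so $Z^HG_1^{-T}=Z^H\bar G_1=:\tilde Z^H$ with $\tilde Z=G_1^TZ\in\mathbb C^{n\times k}$. Furthermore, by Lemma~\ref{lem:cmv-fact} the leading $k\times k$ diagonal block of $G_2$, hence of $G_2^T$ and of $G_2^{-T}$, equals $I_k$; since $[I_k,0]^TZ^H$ is supported on its first $k$ rows, this forces $G_2^{-T}[I_k,0]^T=[I_k,0]^T$. Therefore the middle factor is exactly $F=I_n+[I_k,0]^T\tilde Z^H$, which is unitary plus rank $k$ and fits condition (4) of Definition~\ref{lfr} with $Q=I_n=\diag(I_k,I_{n-k})$ and $\hat Q=I_{n-k}$ a (trivial) unitary Hessenberg matrix.

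For conditions (2)--(3), Lemma~\ref{lem:cmv-fact} shows that $G_1$ and $G_2$ are block diagonal with diagonal blocks of size $k$ or $2k$ and bandwidth $k$, so that $G_1^T$ and $G_2^T$ are $k$-banded unitary matrices, i.e.\ each has lower and upper bandwidth at most $k$. Invoking the standard fact recalled in the paragraph preceding the statement — that a $k$-banded unitary matrix factors as a product of $k$ unitary Hessenberg matrices, and that one may choose all these factors to be lower Hessenberg or all to be upper Hessenberg — we write $R=G_1^T$ as a product of $k$ unitary upper Hessenberg matrices and $L=G_2^T$ as a product of $k$ unitary lower Hessenberg matrices, exactly as Definition~\ref{lfr} demands.

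I do not expect a genuine obstacle: once Lemma~\ref{lem:cmv-fact} is in hand the argument is essentially bookkeeping. The two points needing care are (i) handling the transpose/conjugate so that the middle factor comes out in the clean form $I_n+[I_k,0]^T\tilde Z^H$, which hinges crucially on the $I_k$ leading block of $G_2$, and (ii) matching the lower- versus upper-Hessenberg orientation of each outer factor to Definition~\ref{lfr}. The banded-unitary $\leftrightarrow$ product-of-$k$-Hessenberg equivalence is the lone external ingredient, and it follows from the Givens-rotation representation of such matrices.
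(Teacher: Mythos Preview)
Your argument is correct and follows essentially the same route as the paper: the paper's justification (given in the paragraph immediately preceding the theorem) is precisely to substitute $G=G_1G_2$, write $A=G_2^T(I_n+[I_k,0]^TZ^H\bar G_1)G_1^T$, and then invoke the $k$-bandedness of $G_1^T,G_2^T$ to obtain the Hessenberg factorizations. You have simply made explicit the two steps the paper leaves implicit, namely $G_1^{-T}=\bar G_1$ and $G_2^{-T}[I_k,0]^T=[I_k,0]^T$; for the latter, note that what you actually use is the full block-diagonal structure of $G_2$ (not merely its leading $I_k$ block), which you do cite later.
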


The overall  cost of  computing this condensed  LFR representation  of  the  unitary plus rank-$k$ matrix $A$ is
$\mathcal O(nk^2)$ flops using
$\mathcal O(n k)$ memory storage.

\subsection{Small Rank Modifications of Unitary Block Hessenberg Matrices}\label{Hess}
The class of perturbed unitary block Hessenberg matrices includes the  celebrated block companion forms which  are the
basic tool in the construction of matrix linearizations of matrix polynomials. To be specific let $A\in \mathbb C^{n\times n}$ be
a matrix such that $A= H + [I_k, 0]^T Z^H$,   $Z\in \mathbb C^{n\times k}$, and
$H$ is unitary  block  upper Hessenberg with block size $k<n$.  A compressed LFR format of a matrix unitarily similar to $A$
can be computed as follows.  First of all we can  suppose that  all the subdiagonal blocks $H_{i+1,i}$, $1\leq i\leq n/k$,
are  upper
triangular. If not  we consider the   unitary block diagonal matrix $P$  defined by
$P=\blkdiag\left[P_1, P_2, \ldots, P_{n/k}\right]$ where $P_i\in \mathbb C^{k\times k}$, $P_1=I_k$
and $H_{i+1,i}P_i=P_{i+1}R_i$ is  a QR decomposition of the matrix  $H_{i+1,i}P_i$,
$1\leq i\leq n/k-1$.  Then the matrix  $\widetilde A=P^H A P $  is  such that $\widetilde A=\widetilde H+ [I_k, 0]^T \widetilde Z^H$
and $H$ is unitary  block  upper Hessenberg with block size $k<n$ and $\widetilde H_{i+1,i}=R_i$, $1\leq i\leq n/k-1$.  Hence,
the matrix $\widetilde H$ is banded with lower bandwidth  $k$ and therefore  the factorization
$\widetilde A=I_n( I_n+ [I_k, 0]^T \widehat Z^H)\widetilde H$ gives a suitable LFR representations of  $\widetilde A$.
Summing up we have  the following. 

\begin{theorem}\label{block-hess-red}
Let $A\in \mathbb C^{n\times n}$  be   such that $A=H + 
[I_k, 0]^T Z^H$,   $Z\in \mathbb C^{n\times k}$, and
$H$ is unitary  block  upper Hessenberg    with  block size $k<n$.  Then there exists a unitary block diagonal matrix
$P=\blkdiag\left[P_1, P_2, \ldots, P_{n/k}\right]$,  $P_i\in \mathbb C^{k\times k}$, $P_1=I_k$ such that 
$ \widetilde A=P^H A P $ can be represented in the LFR format as
$\widetilde A=I_n( I_n+ [I_k, 0]^T \widehat Z^H)\widetilde H$  where
$L=I_n$, $R=\widetilde G=P^H G P $ and $F=I_n+[I_k, 0]^T \widehat Z^H$, $ \widehat Z^H=Z^H \widetilde  H^H$.
\end{theorem}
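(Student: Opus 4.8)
The plan is to follow the constructive recipe sketched immediately before the statement and then to verify that the triple it produces meets all the requirements of Definition~\ref{lfr}. Throughout I assume, as in the paper's running convention, that $k$ divides $n$.

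First I would build the block diagonal conjugating matrix $P$. Set $P_1=I_k$ and, for $i=1,\ldots,n/k-1$, let $H_{i+1,i}P_i=P_{i+1}R_i$ be a (square) QR decomposition of the $k\times k$ block $H_{i+1,i}P_i$, so that $P_{i+1}$ is unitary and $R_i$ is upper triangular; if $H_{i+1,i}$ happens to be singular any unitary completion will do, since only the triangular shape of $R_i$ is needed. Putting $P=\blkdiag[P_1,\ldots,P_{n/k}]$, I would check that $\widetilde A:=P^HAP=\widetilde H+[I_k,0]^T\widetilde Z^H$ with $\widetilde H:=P^HHP$ and $\widetilde Z^H:=Z^HP$; here the only point is that $P$ is block diagonal with leading block $P_1=I_k$, so $P^H[I_k,0]^T=[I_k,0]^TP_1^H=[I_k,0]^T$. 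Since $P$ is block diagonal, conjugation by $P$ cannot create nonzero blocks where $H$ had zero ones, hence $\widetilde H$ is again unitary block upper Hessenberg, and by construction its subdiagonal blocks are $\widetilde H_{i+1,i}=P_{i+1}^HH_{i+1,i}P_i=R_i$, which are upper triangular.

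Next I would turn this block property into a scalar bandwidth statement: an upper triangular $(i+1,i)$ block on the first block subdiagonal has nonzero scalar entries only in positions $(ik+r,(i-1)k+c)$ with $r\le c$, i.e.\ at most $k$ rows below the main diagonal, so $\widetilde H$ is a unitary matrix of lower bandwidth $k$. Invoking the same fact already used in Section~\ref{CMV} --- that a unitary matrix of lower bandwidth $k$ factors as a product of $k$ unitary upper Hessenberg matrices --- $\widetilde H$ is admissible as the factor $R$. Finally I would peel off the middle factor: since $\widetilde H$ is unitary, $\widetilde H^{-1}=\widetilde H^H$, and $\widetilde A=\widetilde H+[I_k,0]^T\widetilde Z^H=(I_n+[I_k,0]^T\widehat Z^H)\widetilde H$ with $\widehat Z^H:=\widetilde Z^H\widetilde H^H$. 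Taking $L:=I_n$ (a product of $k$ copies of $I_n$, each a unitary lower Hessenberg matrix), $F:=I_n+[I_k,0]^T\widehat Z^H$ and $R:=\widetilde H$, we obtain $\widetilde A=LFR$; and $F$ has exactly the form required in item~4 of Definition~\ref{lfr} with $Q=I_n$, i.e.\ $\hat Q=I_{n-k}$, which is vacuously unitary Hessenberg. This proves the statement, where one reads $R=P^HGP$ as $R=\widetilde H=P^HHP$ and $\widehat Z^H=\widetilde Z^H\widetilde H^H=Z^HP\widetilde H^H$.

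The statement is essentially a summary of the preceding discussion, so there is no genuine obstacle; the two points deserving care are (i) that the block diagonal similarity preserves the block Hessenberg pattern --- immediate, precisely because $P$ is block diagonal, so no fill-in occurs --- and (ii) the cited factorization of a banded unitary matrix into unitary Hessenberg factors, which is the only ingredient carrying real content and which the paper treats as known.
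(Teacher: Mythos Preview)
Your proposal is correct and follows essentially the same constructive route as the paper, which derives the LFR triple in the paragraph immediately preceding the theorem; you have simply filled in the verifications (that $P^H[I_k,0]^T=[I_k,0]^T$, that the triangular subdiagonal blocks yield scalar lower bandwidth $k$, and that Definition~\ref{lfr} is met with $Q=I_n$). Your closing remark is also right: the formula $\widehat Z^H=Z^H\widetilde H^H$ in the statement is a slip and should read $\widehat Z^H=Z^HP\,\widetilde H^H$, as you obtain.
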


The overall  cost of  computing this condensed  LFR representation  of  the  unitary plus rank-$k$ matrix $A$ is
$\mathcal O(nk^2)$ flops using
$\mathcal O(n k)$ memory storage.

\subsection{Small Rank Modifications of Unitary Block  Diagonal  Matrices}\label{Diag}

The  unitary block diagonal matrix reduces to a unitary diagonal matrix up to a similarity transformation which can be performed within $O(n k^2)$ operations. 
The interest toward the properties of
block CMV matrices is renewed in \cite{GR} where a  general scheme  is proposed
to transform a  unitary diagonal plus a rank$-k$
matrix into a block  CMV structured matrix plus a rank$-k$ perturbation located
in the first $k$ rows only. More specifically we have the following \cite{GR}. 

\begin{theorem}\label{thm:diagonal-to-block-cmv}
  Let $D\in \mathbb C^{n\times n}$  be  a unitary diagonal 
  matrix and $U \in \mathbb C^{n\times k}$  of full rank $k$. Then,
  there exists a unitary matrix $P$ such that
  $G=PDP^H$ is CMV  structured with block size $k$ and the block structure shown in Definition \ref{cmvshape}  and
  $PU=(e_1 \otimes I_k) U_1$ for some $U_1\in \mathbb{C}^{k\times k}$. The matrices $P, G$ and $U_1$ can be computed with $O(n^2k)$ operations. 
\end{theorem}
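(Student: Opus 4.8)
The plan is to mimic the classical scalar CMV construction (Arnoldi/Lanczos-type process producing a short recurrence), but carried out blockwise with blocks of size $k$. First I would reduce $U$ to the normalized form: compute a QR factorization $U = Q_0 \begin{bsmallmatrix} U_1 \\ 0\end{bsmallmatrix}$ with $U_1\in\mathbb C^{k\times k}$, and observe that left-multiplying $D$ by $Q_0^H$ and right by $Q_0$ keeps it unitary (though no longer diagonal after this step — so actually I would instead \emph{build} $P$ column-block by column-block so that $PU = (e_1\otimes I_k)U_1$ holds by construction, with $U_1$ the triangular factor of $U$). This is the anchor that fixes the first block column of $P^H$.

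Next I would construct $P$ by a block Lanczos-style isometric process adapted to the CMV (zig-zag) ordering rather than the Hessenberg ordering. The key structural input is the Nullity Theorem (Lemma~\ref{l1}): since $D$ is unitary, any rank constraint on an off-diagonal submatrix of $G = PDP^H$ is mirrored on the transposed submatrix, so it suffices to force the appropriate \emph{low-rank} (in fact zero) blocks on one side. Concretely, I would alternately generate basis blocks ``from the left end'' and ``from the right end'' of the working space: starting from $\operatorname{span}$ of the first block $[I_k,0]^T U_1$, apply $D$ and $D^H$ alternately, orthonormalizing (via block QR) against the already-constructed blocks. Because $D$ is normal and the perturbation sits only in the first $k$ rows, the standard CMV argument shows the newly produced block is orthogonal to all but the two neighbouring previously-constructed blocks, which yields exactly the $2k$-banded staircase profile of \eqref{gshape}, with the outermost blocks $R_i$, $L_i$ triangular and nonsingular (nonsingularity coming from the full-rank hypothesis on $U$ propagating through the process, again by the Nullity Theorem). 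One then sets $P$ to be the unitary matrix whose block columns are these orthonormal blocks, arranged in the CMV order, so that $G = PDP^H$ has the claimed shape and $PU = (e_1\otimes I_k)U_1$.

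For the complexity count: the process consists of $O(n/k)$ steps, each performing a constant number of $n\times k$ times $k\times k$ block multiplications by (sparse, banded) intermediate matrices and a block QR of an $O(k)\times k$ block, i.e.\ $O(nk)$ per step naively for forming the products with $D$ (diagonal, so actually $O(nk)$ to scale) and $O(nk^2)$ for the orthogonalization — but accumulating the full $n\times n$ matrix $P$ and forming $G=PDP^H$ explicitly costs $O(n^2k)$, which dominates. Hence the total is $O(n^2k)$ operations as claimed, and $U_1$ is obtained as a byproduct of the initial QR step in $O(nk^2)$.

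The main obstacle I expect is proving that the block Lanczos recurrence \emph{terminates cleanly} with the full $2k$-banded staircase structure and, in particular, that the outermost off-diagonal blocks stay \emph{nonsingular} triangular throughout — i.e.\ that no breakdown (rank deficiency in the generated blocks) occurs. The clean way around this is not to argue dynamically about the recurrence but to invoke the Nullity Theorem directly on the finished matrix $G=PDP^H$: once $P$ has been constructed so that the requisite off-diagonal blocks vanish, Lemma~\ref{l1} forces the complementary blocks to have exactly rank $k$, and the full-rank hypothesis on $U$ (together with unitarity of $D$) guarantees the relevant corner blocks inherit nonsingularity, giving the triangular-and-full-rank conclusion for $R_i$ and $L_i$. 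Since this theorem is quoted from~\cite{GR}, I would present the above as a proof sketch and refer to~\cite{GR} for the detailed breakdown-free construction.
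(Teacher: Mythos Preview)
The paper does not supply its own proof of Theorem~\ref{thm:diagonal-to-block-cmv}: the result is quoted verbatim from~\cite{GR} and used as a black box in Section~\ref{Diag}. You correctly recognised this and framed your contribution as a proof sketch with a pointer to~\cite{GR} for the details, which is exactly the right stance here.

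Your sketch---a block Lanczos/Arnoldi process run in the zig-zag CMV ordering, anchored by an initial QR of $U$ and controlled structurally via the Nullity Theorem---is indeed the mechanism underlying the construction in~\cite{GR}, and your complexity accounting is consistent with the $O(n^2k)$ claim. One caution: your argument that ``the full-rank hypothesis on $U$ (together with unitarity of $D$) guarantees the relevant corner blocks inherit nonsingularity'' is not quite enough on its own. The full rank of $U$ fixes the first block of $P^H$, but nonsingularity of the subsequent $R_i$, $L_i$ in Definition~\ref{cmvshape} depends on how the block Krylov spaces generated by $D$ and $D^H$ from that starting block grow, and can fail (breakdown) for degenerate choices of $D$ independently of $U$. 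The treatment in~\cite{GR} handles this via a careful reduction that sidesteps the dynamic breakdown issue; since you already flagged this as the main obstacle and explicitly deferred to~\cite{GR}, your proposal is appropriate for the role this theorem plays in the present paper.
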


By applying  Theorem \ref{thm:diagonal-to-block-cmv} to the matrix pair $(D^H, U)$ we find that there exists
a unitary matrix $P$ such that
  $G=PD^HP^H$ is CMV  structured with block size $k$  and
$PU=(e_1 \otimes I_k) U_1$.  In view of  Lemma \ref{lem:cmv-fact}   this yields
\[
  \begin{array}{ll}
    P( D+UV^H) P^H= G^H +(e_1 \otimes I_k) U_1 (PV)^H =\\
   G_2^H( I + (e_1 \otimes I_k)Z^H)G_1^H,
  \end{array}
\]
where $Z=G_1^HPVU_1^H\in \mathbb{C}^{n\times k}.$
Since the left-hand and the right-hand side matrices  are unitary $k-$banded it follows that  they can both  be factored
as the product of $k$ unitary Hessenberg matrices. In this way we  obtain the next result. 

\begin{theorem}\label{block-diag-red}
Let $A\in \mathbb C^{n\times n}$  be   such that $A= D+UV^H$ with $U,V\in \mathbb C^{n\times k}$, and
$D$  unitary diagonal.  Then there exists a unitary matrix
$P\in \mathbb C^{n\times n}$ such that
$G=PDP^H$ has the  block CMV structure shown in Definition \ref{cmvshape}  and
  $PU=(e_1 \otimes I_k) U_1$ for some $U_1\in \mathbb{C}^{k\times k}$. Moreover, 
$ \widetilde A=P A P^H $ can be represented in the LFR format as
$\widetilde A=G_2^H( I_n+ [I_k, 0]^T  Z^H)\widetilde G_1^H$  where
$L=G_2^H$, $R=G_1^H $, $PDP^H=G=G_1 G_2$ is the  factorization of $G$
provided in Lemma \ref{lem:cmv-fact} and $F=I_n+[I_k, 0]^T Z^H$, $Z^H= U_1 (PV)^H$.
\end{theorem}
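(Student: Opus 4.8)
The plan is to assemble the statement from pieces that are already in place, so the argument is essentially a bookkeeping exercise that chains together Theorem~\ref{thm:diagonal-to-block-cmv}, Lemma~\ref{lem:cmv-fact}, and the standard fact that a unitary $k$-banded matrix factors as a product of $k$ unitary Hessenberg matrices. First I would invoke Theorem~\ref{thm:diagonal-to-block-cmv} applied to the pair $(D^H,U)$: since $D^H$ is unitary diagonal and $U$ has full column rank $k$, there is a unitary $P$ with $G:=PD^HP^H$ in CMV form with block size $k$ and $PU=(e_1\otimes I_k)U_1$ for some $U_1\in\mathbb C^{k\times k}$. Note $U_1$ is nonsingular because $U$ has full rank and $P$ is unitary, so $U_1^{-H}$ below makes sense. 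Then I would form the similarity $\widetilde A = PAP^H = PDP^H + PU V^H P^H = G^H + (e_1\otimes I_k)U_1(PV)^H$, using $PDP^H=(PD^HP^H)^H=G^H$.

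Next I would substitute the CMV factorization $G=G_1G_2$ from Lemma~\ref{lem:cmv-fact}, so that $G^H = G_2^H G_1^H$, and factor out $G_2^H$ on the left and $G_1^H$ on the right. Writing $(e_1\otimes I_k)U_1(PV)^H = G_2^H\big(G_2(e_1\otimes I_k)\big)U_1(PV)^H G_1\, G_1^H$ and observing that $G_2 = \diag(I_k,G_{2,2},\dots)$ has identity leading block, hence $G_2(e_1\otimes I_k)=e_1\otimes I_k$, we obtain
\[
\widetilde A = G_2^H\big(I_n + (e_1\otimes I_k)Z^H\big)G_1^H,\qquad Z^H := U_1(PV)^H G_1 = U_1 (PV)^H G_1.
\]
Taking conjugate transpose of the scalar relation, $Z = G_1^H (PV) U_1^H$, matching the claimed $Z^H = U_1(PV)^H$ up to the right unitary factor $G_1$ — and indeed the theorem statement records $Z^H = U_1(PV)^H$ with $F=I_n+[I_k,0]^T Z^H$, so I would simply verify that $[I_k,0]^T = e_1\otimes I_k$ and that the middle factor $F:=I_n + (e_1\otimes I_k)Z^H$ is exactly of the form required by Definition~\ref{lfr}, namely a block-diagonal unitary ($Q=I_n$, $\hat Q = I_{n-k}$) plus a rank-$k$ term confined to the first $k$ rows.

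It remains to check that $L:=G_2^H$ and $R:=G_1^H$ are of the required Hessenberg type. By Lemma~\ref{lem:cmv-fact}, $G_1$ and $G_2$ are block diagonal with $2k$-by-$2k$ diagonal blocks of bandwidth $k$ (outermost blocks triangular); hence each is a unitary $k$-banded matrix, and so is its conjugate transpose. The standard reduction — peel off one subdiagonal at a time by a sequence of Givens rotations — shows that a unitary matrix with lower bandwidth $k$ is the product of $k$ unitary lower Hessenberg matrices, and dually with $k$ upper Hessenberg matrices in the upper-bandwidth case; applying this to $G_2^H$ (which inherits the band structure) gives $L$ as a product of $k$ unitary lower Hessenberg matrices and to $G_1^H$ gives $R$ as a product of $k$ unitary upper Hessenberg matrices, as demanded by items~2 and~3 of Definition~\ref{lfr}. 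The complexity claim, $O(n^2k)$, is dominated by the construction of $P$ and $G$ in Theorem~\ref{thm:diagonal-to-block-cmv}; the CMV factorization costs $O(nk^2)$ and the banded-to-Hessenberg unfolding costs $O(nk^2)$, all within $O(n^2k)$. The only mildly delicate point is keeping the placement of the rank-$k$ perturbation correct through the two factor-outs — i.e.\ confirming $G_2(e_1\otimes I_k)=e_1\otimes I_k$ so the perturbation stays in the first $k$ rows — and that $U_1$ is invertible so the formula for $Z$ is well defined; everything else is a direct substitution.
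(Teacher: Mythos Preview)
Your proposal is correct and follows essentially the same approach as the paper's own derivation (the paragraph immediately preceding the theorem): apply Theorem~\ref{thm:diagonal-to-block-cmv} to $(D^H,U)$, then insert the factorization $G=G_1G_2$ of Lemma~\ref{lem:cmv-fact} and pull $G_2^H$, $G_1^H$ outside using $G_2(e_1\otimes I_k)=e_1\otimes I_k$. You correctly obtain $Z^H = U_1(PV)^H G_1$, agreeing with the paper's $Z = G_1^H PV U_1^H$; the missing $G_1$ in the theorem's displayed $Z^H$ (and the clash between $G=PDP^H$ in the statement and $G=PD^HP^H$ in the derivation) are typos in the paper, not errors on your part---and your aside about $U_1^{-H}$ is harmless but unnecessary, since invertibility of $U_1$ is never actually used.
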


The overall  cost of  computing this condensed  LFR representation  of  the  unitary plus rank-$k$ matrix $A$ is
$\mathcal O(n^2k)$ flops using
$\mathcal O(n k)$ memory storage.

In the next sections  we investigate the properties of the
Hessenberg reduction of a matrix
given in the $LFR$ format.

\section{Factored Representations of  Hessenberg Matrices}\label{two}

In this section we investigate  suitable conditions under which a factored representation $A=LFR \in \mathbb C^{m\times m}$,  where
$L$ is the product of $k<n$ unitary lower Hessenberg matrices, $R$
is the product of
$k$  unitary upper Hessenberg matrices and the middle factor $F$  is  unitary plus rank$-k$,
specifies a matrix in Hessenberg form. In Section~\ref{three} we will discuss the chasing algorithm for   reducing, by unitary similarity, a matrix of the form $L(I+(e_1\otimes I_k)Z^H)R$ to Hessenberg form maintaining the factorization and enforcing the properness of the factor $L$ to avoid breakdown of the subsequent $QR$ iterations.

A key ingredient  is the properness of the generalized Hessenberg factors.
\begin{definition}
	A matrix $H\in \mathbb C^{m\times m}$ is called {\em $k$-upper Hessenberg} if $h_{ij}=0$ when $i>j+k$.
	Similarly, $H$ is called  {\em $k$-lower Hessenberg} if $h_{ij}=0$ when $j>i+k$.   In addition, 	
	when  $H$ is {\em $k$-upper Hessenberg} ({\em $k$-lower Hessenberg}) and
        the outermost entries are non-zero, that is, $h_{j+k,j}\neq 0$ ($h_{j,j+k}\neq 0$), $1\leq j\leq m-k$,
         then  the matrix is called {\em proper}.	
\end{definition}

Note that for $k=1$   a  Hessenberg matrix $H$ is proper iff it is unreduced. 
Also,  a $k$-upper Hessenberg matrix $H\in \mathbb C^{m\times m}$ is proper iff $\det(H(k+1:m, 1:m-k))\neq 0$.
Similarly a $k$-lower Hessenberg matrix $H$ is proper iff $\det(H(1:m-k, k+1:m))\neq 0$.

An important property of any unitary upper Hessenberg
matrix $H\in \mathbb C^{m\times  m} $  is that it can be represented as product of  elementary transformations,
i.e., $H=\mathcal G_1\mathcal G_2\cdots \mathcal G_{m-1} \mathcal D_m$
where $\mathcal G_\ell=I_{\ell-1}\oplus G_\ell \oplus I_{m-\ell-1}$  with $ G_\ell=\left[\begin{array}{cc} \alpha_\ell & \beta_\ell \\
- \beta_\ell & \bar \alpha_\ell\end{array}\right]$, $|\alpha_\ell|^2 + \beta_\ell^2=1$, $\alpha_\ell, \in \mathbb C$, $\beta_\ell\in \mathbb{R}, \beta_\ell\ge 0$,
are unitary Givens rotations and $\mathcal D_m=I_{m-1}\oplus \theta_m$ with $|\theta_m|=1$. In this way  the matrix $H$ is stored by
two vectors of length $m$  formed by the elements $\alpha_\ell, \beta_\ell$, $1\leq \ell\leq m-1$ and $\theta_m$.
The same representation also extends  to  unitary $k$-upper   Hessenberg  matrices specified as the product of $k$
unitary upper Hessenberg matrices multiplied on the right by a unitary diagonal matrix which is the identity matrix modified in the last diagonal entry. Lower unitary Hessenberg matrices can be parametrized similarly as $H=\mathcal G_{m-1}\mathcal G_{m-2}\cdots \mathcal G_{1} \mathcal D_m$.

Another basic property of unitary plus rank$-k$ matrices is the existence  of suitable embeddings which maintain their structural properties.  The embedding turns out to be crucial to ensure the properness of the factor $L$ and guarantee the safe application of implicit $QR$ iterations. The embedding is also important for the bulge chasing algorithm as we explain in the next section. The following result is  first proved in  \cite{BDG} and here specialized  to a matrix of the form determined in  Theorems  \ref{block-cmv-red},  \ref{block-hess-red} and  \ref{block-diag-red}.

\begin{theorem}\label{theo:embedding}
  Let $A\in \mathbb{C}^{n\times n}$ be such that $A=L( I + (e_1 \otimes I_k)Z^H)R=LFR$,
  where $L$ and $R$ are unitary  and $Z\in \mathbb C^{n\times k}$.  Let
  $Z=Q G$, $G\in \mathbb C^{k\times k}$, be  the economic QR factorization of $Z$.  Let 
  $\widehat U\in \mathbb C^{m\times m}$, $m=n+k$,  be defined  as
  \[
  \widehat U=I_m -\left[\begin{array}{cc} Q\\-I_k\end{array}\right]\left[\begin{array}{cc} Q\\-I_k\end{array}\right]^H.
  \]
  Then it holds
  \begin{enumerate}
  \item  $\widehat U$ is unitary;
  \item  the matrix $\widehat A \in \mathbb C^{m\times m}$ given by
    \[
    \widehat A=\left[\begin{array}{cc}L \\ & I_k\end{array}\right] \left( \widehat U  +
    \left(\left[\begin{array}{cc}G^H\\0\end{array}\right] + \left[\begin{array}{cc} Q\\-I_k\end{array}\right]\right)
    \left[\begin{array}{cc}Q\\0\end{array}\right]^H\right) \left[\begin{array}{cc}R \\ & I_k\end{array}\right]
    \]
    satisfies
    \[
    \widehat A=\left[\begin{array}{cc} A & B \\0 & 0\end{array}\right], \quad B\in \mathbb C^{n\times k}.
    \]
   \end{enumerate} 
\end{theorem}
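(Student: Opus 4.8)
The plan is to prove both assertions by a direct block computation, partitioning every matrix into blocks of sizes $n$ and $k$ so that $m=n+k$. It is convenient to abbreviate $W:=\left[\begin{smallmatrix}Q\\-I_k\end{smallmatrix}\right]\in\mathbb C^{m\times k}$, so that $\widehat U=I_m-WW^H$. For part~1 the only input needed is that the economic QR factorization $Z=QG$ produces a matrix $Q$ with orthonormal columns, $Q^HQ=I_k$. Then $W^HW=Q^HQ+I_k=2I_k$, so $W/\sqrt2$ has orthonormal columns and $\widehat U=I_m-2\,(W/\sqrt2)(W/\sqrt2)^H$ is a block Householder reflector. Concretely, $\widehat U^H=\widehat U$ and $\widehat U^2=I_m-2WW^H+W(W^HW)W^H=I_m-2WW^H+2WW^H=I_m$; hence $\widehat U^H\widehat U=I_m$ and $\widehat U$ is unitary.

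For part~2 I would first simplify the middle factor
\[
M:=\widehat U+\Bigl(\left[\begin{smallmatrix}G^H\\0\end{smallmatrix}\right]+W\Bigr)\left[\begin{smallmatrix}Q\\0\end{smallmatrix}\right]^H
 =I_m-WW^H+W\left[\begin{smallmatrix}Q\\0\end{smallmatrix}\right]^H+\left[\begin{smallmatrix}G^H\\0\end{smallmatrix}\right]\left[\begin{smallmatrix}Q\\0\end{smallmatrix}\right]^H .
\]
Expanding the rank-structured terms in $2\times2$ block form gives $WW^H=\left[\begin{smallmatrix}QQ^H&-Q\\-Q^H&I_k\end{smallmatrix}\right]$, $W\left[\begin{smallmatrix}Q\\0\end{smallmatrix}\right]^H=\left[\begin{smallmatrix}QQ^H&0\\-Q^H&0\end{smallmatrix}\right]$, and $\left[\begin{smallmatrix}G^H\\0\end{smallmatrix}\right]\left[\begin{smallmatrix}Q\\0\end{smallmatrix}\right]^H=\left[\begin{smallmatrix}G^HQ^H&0\\0&0\end{smallmatrix}\right]$. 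The combination $-WW^H+W\left[\begin{smallmatrix}Q\\0\end{smallmatrix}\right]^H$ collapses to $\left[\begin{smallmatrix}0&Q\\0&-I_k\end{smallmatrix}\right]$, the first $n$ columns cancelling exactly, so that $I_m$ plus this equals $\left[\begin{smallmatrix}I_n&Q\\0&0\end{smallmatrix}\right]$. Adding the last term and using $G^HQ^H=(QG)^H=Z^H$ together with the fact that $\left[\begin{smallmatrix}Z^H&0\\0&0\end{smallmatrix}\right]$ is precisely the $n\times n$ matrix $(e_1\otimes I_k)Z^H$ sitting in the leading block, one obtains $M=\left[\begin{smallmatrix}F&Q\\0&0\end{smallmatrix}\right]$ with $F=I_n+(e_1\otimes I_k)Z^H$.

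It then suffices to multiply out $\widehat A=\diag(L,I_k)\,M\,\diag(R,I_k)=\left[\begin{smallmatrix}LFR&LQ\\0&0\end{smallmatrix}\right]=\left[\begin{smallmatrix}A&B\\0&0\end{smallmatrix}\right]$ with $B=LQ\in\mathbb C^{n\times k}$, which is the assertion. No genuine obstacle is involved; the only points requiring care are the block bookkeeping — $\left[\begin{smallmatrix}G^H\\0\end{smallmatrix}\right]$ carries $G^H$ in its top $k$ rows while $W$ carries $-I_k$ in its bottom $k$ rows — and the exact cancellation of the first $n$ columns in $-WW^H+W\left[\begin{smallmatrix}Q\\0\end{smallmatrix}\right]^H$, which is exactly what forces the last $k$ rows of $\widehat A$ to vanish.
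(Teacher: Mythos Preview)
Your proof is correct and follows essentially the same approach as the paper's: both arguments verify Property~1 from $W^HW=2I_k$ and establish Property~2 by directly simplifying the middle factor to $\left[\begin{smallmatrix}I_n & Q\\ 0 & 0\end{smallmatrix}\right]+\left[\begin{smallmatrix}I_k\\0\end{smallmatrix}\right]\left[\begin{smallmatrix}Z\\0\end{smallmatrix}\right]^H$. Your write-up is more explicit (identifying $\widehat U$ as a block Householder reflector and naming $B=LQ$), but the underlying computation is identical.
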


\begin{proof}
  Property 1 follows  by direct calculations from
  \[
  \left[\begin{array}{cc}Q\\-I_k\end{array}\right]^H\left[\begin{array}{cc} Q\\-I_k\end{array}\right]=2I_k.
  \]
  For Property 2 we  find  that
  \[
  \widehat U  +
    \left(\left[\begin{array}{cc}G^H\\0\end{array}\right] + \left[\begin{array}{cc} Q\\-I_k\end{array}\right]\right)
    \left[\begin{array}{cc}Q\\0\end{array}\right]^H=
    \left[\begin{array}{cc}I_n & Q \\ & 0_k\end{array}\right] +
    \left[\begin{array}{cc} I_k\\0\end{array}\right]\left[\begin{array}{cc} Z\\0\end{array}\right]^H .
    \]
\end{proof}

The  unitary matrices $L$ and $R$   given  in  Theorems  \ref{block-cmv-red},  \ref{block-hess-red} and  \ref{block-diag-red} are $k$-Hessenberg matrices.  The same clearly holds for the
larger matrices $\diag(L, I_k)$ and $\diag(R, I_k)$ occurring in the factorization of $\widehat A$.  The next result
is the main contribution of this section and it provides conditions under which a matrix  specified as a product $L\cdot \tilde F\cdot R$,  where
$L$ is  a unitary $k$-lower Hessenberg matrix $R$ is a unitary $k$-upper Hessenberg matrix and $\tilde F$ is  a unitary  matrix
plus  a rank$-k$  correction,   is in Hessenberg form. 

In fact, once we apply the embedding described by Theorem~\ref{theo:embedding} to $A=L(I+(e_1\otimes I_k)Z^H)R$, the matrix obtained,  $\widehat A$, is no more in the LFR format since the middle factor is not in the prescribed format required by Definition~\ref{lfr}. Moreover $\widehat L=L\oplus I_k$ is not a proper matrix, making implicit $QR$ iterations subject to breakdown.  

\begin{theorem}\label{teo:rep1}
  Let $L, R\in \mathbb{C}^{m\times m}$, $m=n+k$,
  be two unitary matrices, where $L$ is a proper unitary
  $k$-lower Hessenberg matrix and $R$ is a unitary $k$-upper Hessenberg matrix.
  Let $Q$ be a block diagonal unitary upper Hessenberg matrix of the form
  $Q=\left[\begin{array}{c|c}
I_k & \\ \hline & \hat Q
    \end{array}\right] $, with $\hat Q$ $n\times n$ unitary
  Hessenberg. Let $F=Q+[I_k, 0]^TZ^H$  be   a unitary plus rank$-k$ matrix  with $ Z\in \mathbb C^{m\times k}$.
  Suppose that the matrix $\widehat A=LFR$ satisfies the block structure
  \[
  \widehat A=\left[\begin{array}{cc} A & \ast \\ 0_{k, n} & 0_{k,k}\end{array}\right].
  \]
  Then  $\widehat A$ is an upper Hessenberg matrix.
  \end{theorem}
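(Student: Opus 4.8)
The plan is to exploit the structure of the three factors $L$, $F$, $R$ to pin down exactly which entries of $\widehat A = LFR$ can be nonzero, and then use the hypothesis that the last $k$ rows of $\widehat A$ vanish (together with properness of $L$) to rule out the entries below the first subdiagonal. The starting observation is that $L$ is $k$-lower Hessenberg, $R$ is $k$-upper Hessenberg, and $F = Q + [I_k,0]^T Z^H$ where $Q = I_k \oplus \hat Q$ is unitary upper Hessenberg, hence itself $1$-upper Hessenberg; so $F$ is $1$-upper Hessenberg plus a correction supported on its first $k$ rows. Multiplying these band structures, $LFR$ is at first sight only $k$-lower Hessenberg — the product of a $k$-lower and an (upper) Hessenberg and a $k$-upper Hessenberg matrix is $k$-lower Hessenberg in general. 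The point of the proof is that the specific rank-one-block pattern of $F$ and the vanishing of the bottom $k$ rows of $\widehat A$ conspire to collapse the lower bandwidth from $k$ down to $1$.

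\medskip

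\noindent\textbf{Step 1: locate the low-rank part.} Write $\widehat A = L Q R + L [I_k,0]^T Z^H R = LQR + (L(:,1{:}k))\,(Z^H R)$. Since $L$ is $k$-lower Hessenberg, its first $k$ columns $L(:,1{:}k)$ are supported on rows $1$ through $2k$; thus the rank-$k$ correction affects only the first $2k$ rows of $\widehat A$ and is harmless for the Hessenberg claim in rows $\ge 2k+1$. So the real content is that $M := LQR$ is, in its last $n-k$ rows, upper Hessenberg, and that the vanishing of the last $k$ rows of $\widehat A = M + (\text{correction})$ — equivalently, since the correction is zero in those rows, the vanishing of the last $k$ rows of $M$ — forces the subdiagonal blocks further up to align into a single subdiagonal.

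\medskip

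\noindent\textbf{Step 2: use properness of $L$ and the Nullity Theorem.} Here is where I expect the main obstacle. The matrix $M = LQR$ is unitary, and I would study its off-diagonal rank structure via Lemma~\ref{l1}. For $h \ge 2k$, consider the trailing submatrix $M(h+1:m,\,1:h)$: expanding $M = LQR$, the factor $R$ being $k$-upper Hessenberg means $R(1:h,\,1:h-k)$ captures the relevant columns, $Q$ being $1$-upper Hessenberg contributes one more, and $L$ being $k$-lower Hessenberg... the naive bound gives a band of width $2k+1$, which is too weak. The fix is to use that $L$ is \emph{proper} $k$-lower Hessenberg, so $L$ is invertible and $L^{-1}$ is (block) upper triangular in a suitable sense; combined with $\widehat A$ having a zero block $0_{k,n}$ in its last $k$ rows, we get that $F R = L^{-1}\widehat A$ inherits a zero block in its last $k$ rows, and then peeling $R$ (which is proper? — if not, one argues with its actual band) off on the right shows $F$ itself, hence $Q$ and $Z$, satisfy extra constraints. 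I would iterate this ``multiply by the inverse and read off the zero block'' argument: properness of $L$ lets the zero block $0_{k,n}$ propagate, and the fact that $Q$ is genuine (scalar) upper Hessenberg — bandwidth $1$, not $k$ — is what ultimately delivers subdiagonal bandwidth $1$ rather than $k$ for the product.

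\medskip

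\noindent\textbf{Step 3: assemble.} Having shown (a) entries of $\widehat A$ in rows $\ge 2k+1$ and strictly below the first subdiagonal vanish, via the band-plus-rank count sharpened by properness, and (b) entries in the first $2k$ rows are unrestricted (consistent with Hessenberg form, which allows a full upper-left $2k\times 2k$ block to be dense), one concludes $\widehat A$ is upper Hessenberg. I would organize (a) as: for each $j$ with $1 \le j \le n$, show $\widehat A(i,j) = 0$ for all $i \ge j+2$ by writing $e_i^H \widehat A e_j = e_i^H L F R e_j$ and using that $R e_j$ is supported on rows $1:j+k$ (as $R$ is $k$-upper Hessenberg), that $F$ maps that to a vector supported on rows $1:j+k$ as well except for a first-$k$-rows correction — wait, $F$ being $1$-upper Hessenberg pushes support only to $1:j+k+1$ — no: we need the sharper statement, and that is exactly where the zero trailing block of $\widehat A$, fed back through $L^{-1}$, must be invoked to kill the extra width. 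The cleanest route is probably a dimension/rank count: $\operatorname{rank}\widehat A(j+2:m,\,1:j) \le \operatorname{rank}(\text{trailing part of }L)\,+\,(\text{band of }QR)$, and properness of $L$ plus the zero block makes the right side zero.

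\medskip

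\noindent\textbf{Expected main difficulty.} The delicate point is converting ``$L$ proper $k$-lower Hessenberg'' and ``$\widehat A$ has a zero $k\times n$ trailing block'' into the quantitative statement that $QR$ must itself have the right zero pattern in its trailing rows, so that the lower bandwidth of $LQR$ does not exceed $1$. Getting the bookkeeping right — which rows, which columns, and why the count is $1$ and not $k$ — is the crux; everything else is band arithmetic. I would first nail down the $k=1$ case by hand to fix the indices, then lift it to general $k$.
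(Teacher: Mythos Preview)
Your Step~1 contains a fatal error. You claim that since $L$ is $k$-lower Hessenberg, its first $k$ columns $L(:,1{:}k)$ are supported on rows $1$ through $2k$. This is false: a $k$-lower Hessenberg matrix has $l_{ij}=0$ only when $j>i+k$ (zeros lie \emph{above} the $k$-th superdiagonal), so its first $k$ columns are completely unrestricted and generically dense. You have confused $k$-lower with $k$-upper Hessenberg. Consequently the rank-$k$ correction $L(:,1{:}k)\,Z^{H}R$ is in general nonzero in every row, and your plan to localize it to the top $2k$ rows and treat $LQR$ separately in the remaining rows collapses at the outset. This is not a bookkeeping slip; the whole architecture of your argument rests on it.

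The paper's proof proceeds by a different mechanism. It studies the unitary product $C=LQ$ and uses the Nullity Theorem to show that for every $h=2,\ldots,n+1$ the submatrix $C(h{:}m,\,1{:}h+k-2)$ has rank exactly $k$: the bound $\le k$ comes from $\rank L(h{:}m,\,1{:}h+k-1)=k$, and properness of $L$ makes the bottom-left block $M=L(n+1{:}m,\,1{:}k)=C(n+1{:}m,\,1{:}k)$ nonsingular, forcing equality. This quasiseparable structure gives a decomposition $C=PS^{H}+U_{k}$ with $U_{k}$ a $(1-k)$-upper Hessenberg matrix and $PS^{H}$ of rank $k$, whose generators are read off from the corner as $PS^{H}=L(:,1{:}k)\,M^{-1}C(n+1{:}m,:)$. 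Now the hypothesis that the last $k$ rows of $\widehat A=LFR$ vanish gives $\bigl(C(n+1{:}m,:)+MZ^{H}\bigr)R=0$, hence $C(n+1{:}m,:)=-MZ^{H}$, and therefore
\[
PS^{H}=L(:,1{:}k)\,M^{-1}\bigl(-MZ^{H}\bigr)=-L(:,1{:}k)\,Z^{H}=-L[I_{k},0]^{T}Z^{H}.
\]
Thus the low-rank part of $C$ coincides exactly with the negative of the prescribed rank-$k$ correction, so $LF=LQ+L[I_{k},0]^{T}Z^{H}=C-PS^{H}=U_{k}$. Finally $\widehat A=U_{k}R$ is the product of a $(1-k)$-upper Hessenberg matrix with a $k$-upper Hessenberg matrix, hence $1$-upper Hessenberg.

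The intuition you were reaching for in Steps~2 and~3---that the trailing zero block, fed back through $L$, constrains the structure---is correct in spirit, but the mechanism is not spatial localization of the correction. Rather, the correction $L[I_{k},0]^{T}Z^{H}$ turns out to be \emph{identical} to the rank-$k$ quasiseparable part of $LQ$, and it is precisely the vanishing of the last $k$ rows of $\widehat A$ that forces this identification. Your sketch never arrives at this cancellation and remains too vague to constitute a proof.
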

\begin{proof}
  From Lemma \ref{l1}   we find that $M=L(n+1:m, 1:k)$ is nonsingular due to the properness of $L$.

  Now, let us consider the   matrix $C=L\,Q$. This matrix
 is unitary with a $k$-quasiseparable structure below the $k$-th upper diagonal. Indeed,
 for any $h, h=2, \ldots n+1$ we have
\begin{eqnarray*}
C(h:m, 1:h+k-2)&=&L(h:m, :)\, Q(:, 1:h+k-2)=\\
&=&L(h:m, 1:h+k-1)\,Q(1:h+k-1,1:h+k-2).
\end{eqnarray*}
Applying Lemma~\ref{l1} we have $\rank(L(h:m, 1:h+k-1))=k$, implying that also $\rank(C(h:m, 1:h+k-2))\le k$. 
Since $C(n+1:m, 1:k)=L(n+1:m. : ) Q(:, 1:k)=M$ is non  singular, we conclude that $\rank(C(h:m, 1:h+k-2))= k$, $2\leq h\leq n+1$.

From this observation we can then find  a set of generators $P, S\in \mathbb{C}^{(m\times k)}$ and
a $(1-k)$-upper Hessenberg matrix $U_k$
 such that $U_k(1,k)=U_k(n, m)=0$
so that $C=PS^H+U_k$ \cite{DB}.

 Then we can recover the rank $k$
correction $PS^H$ from the left-lower  corner of $C$  obtaining
\[
PS^H=C(:, 1:k) M^{-1} C(n+1:m, :)=L(:,1:k) M^{-1} C(n+1:m, :), 
\]
since $C(:, 1:k)=L Q(:, 1:k)=L(:,1:k)$.  Notice that  $B=U_k\, R$ is upper Hessenberg as it is  the product of a $(1-k)$-upper
Hessenberg matrix by  a $k$-upper Hessenberg matrix. Moreover, we find that
$B(n+1:m, :)=U_k(n+1:m, :)R=0$ since $U_k(n+1:m, :)=0$.  From the block structure of $\widehat A$ there follows that
\[
\left(C(n+1:m, :)+M  Z^H\right)R=0,
\]
which gives
\[
PS^H=L(:,1:k) M^{-1} C(n+1:m, :)=-L(:,1:k) Z^H=-L[I_k, 0]^TZ^H.
\]
Hence $U_k=L(Q+[I_k,0]^TZ^H)=L\, F$ and therefore $B=U_k\, R=LFR =\widehat A$ which concludes the proof.
\end{proof}

\section{ The Bulge Chasing  Algorithm}\label{three}
In this section we present a bulge-chasing algorithm relying upon Theorem \ref{teo:rep1}
to compute the Hessenberg reduction of the matrix
$\widehat A$ given as  in Theorem \ref{theo:embedding}, i.e.,  the embedding of  $A=L(I+(e_1\otimes I_k)Z^H)R$. We recall that $Q$ and $G$ are the  factors of the economic $QR$ factorization of $Z$. 

Let us set
\[
X=\left[\begin{array}{cc} Q\\-I_k\end{array}\right], \quad Y=
\left[\begin{array}{cc}G^H\\0\end{array}\right] + X,  \quad \quad W=\left[\begin{array}{cc} Q\\0\end{array}\right], 
\]
so that we have
\begin{equation} \label{widehatA}
\widehat A=\left[\begin{array}{cc}L \\ & I_k\end{array}\right] \left( \widehat U  +
Y W^H\right) \left[\begin{array}{cc}R \\ & I_k\end{array}\right],
\quad  \widehat U =I_m-XX^H.
\end{equation}
Observe that $X(k+1:m, :)=Y(k+1:m, :)$ and, moreover, $Y(n+1:m, :)=-I_k$  which implies $\rank(Y)=k$.
In the preprocessing phase we initialize
\[
L_0:=\left[\begin{array}{cc}L \\ & I_k\end{array}\right], \quad  R_0:=\left[\begin{array}{cc}R \\ & I_k\end{array}\right],  \quad 
X_0:=X,  \quad Y_0:=Y, \quad W_0:=W.
\]
Notice that $L_0$ is a unitary $k$-lower  Hessenberg matrix and $R_0$ is  a unitary $k$-upper  Hessenberg matrix and, therefore,
they can both  be represented by the product of $k$ Hessenberg matrices.  This property will be maintained under the
bulge chasing process. In the cases considered in this paper, we rely on the additional structure of $L_0$ namely that $L_0$ is also $k$-upper Hessenberg as we can observe from Theorems~\ref{block-cmv-red},~\ref{block-hess-red} and~\ref{block-diag-red}.	

In this section we make use of the following technical result.
\begin{lemma}\label{lem:fact}
	Let $B\in \mathbb{C}^{n\times n}$ be a  unitary $k$ Hessenberg matrix.
	Let $H\in \mathbb{C}^{n\times n}$, be a unitary Hessenberg obtained as a sequence of ascending or descending  Givens transformations acting on two consecutive rows, i.e. $H={\mathcal G}_{n-1}{\mathcal G}_{n-2}, \cdots {\mathcal G}_1$ if $H$ is lower Hessenberg  or $H={\mathcal G}_1{\mathcal G}_2 \cdots {\mathcal G}_{n-1}$ if $H$ is upper Hessenberg. 
	Then, there exist a unitary  $k$ Hessenberg matrix  $\tilde B$ (with the same orientation as $B$) and a unitary Hessenberg  matrix $\tilde H$ such that $HB= \tilde B\tilde H$ where
	\begin{itemize} \item $\tilde H=\begin{bmatrix}
	I_k & \\ &\hat H
	\end{bmatrix}$ if  $B$ is $k$-lower Hessenberg, 
	\item $\tilde H=\begin{bmatrix}
\hat H& \\ &	I_k 
	\end{bmatrix}$ if $B$ is $k$-upper Hessenberg, 
\end{itemize}
and $\hat H$ has the same orientation of $H$.
\end{lemma}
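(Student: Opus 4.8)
The plan is to peel a single unitary Hessenberg factor off the \emph{right} of $HB$ by a structured $LQ$-type reduction of its trailing (or leading) columns, and then to check that what remains is $k$-Hessenberg. First I would dispose of the $k$-upper Hessenberg case by conjugating with the reversal (anti-identity) permutation $J$: if $B$ is $k$-upper Hessenberg then $JBJ$ is $k$-lower Hessenberg, $JHJ$ is a unitary Hessenberg with the opposite orientation to $H$, and $HB=\tilde B\tilde H$ with the asserted properties is, after conjugation by $J$, equivalent to the corresponding statement for $(JHJ)(JBJ)$. Since conjugation by $J$ flips orientations and swaps the leading and trailing identity blocks of $\tilde H$, this reduces everything to the case $B$ $k$-lower Hessenberg, $\tilde H=\mathrm{diag}(I_k,\hat H)$, with the orientations of $\tilde B$ and $\hat H$ tracking through correctly. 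So from now on assume $B$ is $k$-lower Hessenberg; the goal becomes to produce a unitary Hessenberg $\hat H\in\mathbb C^{(n-k)\times(n-k)}$ with the same orientation as $H$ such that $\tilde B:=HB\,\mathrm{diag}(I_k,\hat H^{-1})$ is $k$-lower Hessenberg (it is automatically unitary, being a product of unitaries).

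Next I would localise the condition. A unitary matrix is $k$-lower Hessenberg exactly when its first $k$ columns are unconstrained (the inequality $j>i+k$ is vacuous for $j\le k$) and its last $n-k$ columns, viewed as an $n\times(n-k)$ block, form a lower triangular matrix. Right-multiplication by $\mathrm{diag}(I_k,\hat H^{-1})$ does not touch the first $k$ columns, so it suffices to choose $\hat H$ so that $HB(:,k+1:n)\,\hat H^{-1}$ is lower triangular. The key observation here is that $B(:,k+1:n)$ is \emph{already} lower triangular --- this is literally the statement $b_{ij}=0$ for $j>i+k$ --- so $HB(:,k+1:n)=H\,L$ with $L$ an $n\times(n-k)$ lower triangular matrix with orthonormal columns.

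Now I would analyse the structure of $HL$ using the Nullity Theorem (Lemma~\ref{l1}) applied to $H$: for every $i$ the block $H(1:i,i+1:n)$ is complementary to $H(i+1:n,1:i)$, and the latter contains at most the single subdiagonal entry of $H$, so $\mathrm{rank}\,H(1:i,i+1:n)\le1$; since $L$ is lower triangular, $L(:,i+1:n-k)$ is supported on rows $i+1,\dots,n$, whence $(HL)(1:i,i+1:n-k)=H(1:i,i+1:n)\,L(i+1:n,i+1:n-k)$ again has rank at most $1$. Thus $HL$ is lower triangular up to a quasiseparable rank-one bump above the diagonal; when $H$ has the same orientation as $B$ (i.e.\ $H$ lower Hessenberg) this bump degenerates and $HL$ is in fact $1$-lower Hessenberg, which is triangularised by one top-to-bottom sweep of $n-k-1$ Givens rotations on consecutive columns, their product being exactly $\hat H^{-1}$ with $\hat H$ lower Hessenberg. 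In the opposite-orientation case one runs the analogous bottom-to-top sweep, again producing a single Hessenberg $\hat H$, this time upper Hessenberg. Either way, setting $\tilde B=[\,HB(:,1:k)\ \vert\ HL\,\hat H^{-1}\,]$ and $\tilde H=\mathrm{diag}(I_k,\hat H)$ gives $HB=\tilde B\tilde H$ with $\tilde B$ unitary and, by the localisation step, $k$-lower Hessenberg; undoing the reversal conjugation yields the $k$-upper Hessenberg version with $\tilde H=\mathrm{diag}(\hat H,I_k)$.

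The main obstacle I expect is precisely the sweep in the case where $H$ and $B$ have opposite orientations: there $HL$ is genuinely dense above the diagonal, and it is not a priori clear that a single Hessenberg's worth of rotations can triangularise it. The point that unlocks this is the rank-$\le1$ estimate above --- the strictly-upper part of $HL$ has quasiseparable rank one, which is exactly the amount of structure one Hessenberg factor can carry --- so a single bulge of size one is chased monotonically from one corner to the other without ever increasing the quasiseparable rank of what remains above the diagonal or disturbing the portion already made lower triangular. The delicate part is verifying that each rotation simultaneously annihilates the intended entry and leaves the already-cleared region untouched; I would organise that as an induction on the number of columns processed, using the rank-one structure to propagate the invariant.
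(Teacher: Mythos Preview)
Your approach is genuinely different from the paper's. The paper never isolates the trailing $n-k$ columns or invokes quasiseparable rank; instead it works directly with the Givens factorisation $H=\mathcal G_{n-1}\cdots\mathcal G_1$ (in the case it treats, $H$ lower Hessenberg and $B$ $k$-upper Hessenberg) and pushes each $\mathcal G_i$ through $B$ one at a time: the first $k$ rotations are absorbed into $B$ without destroying its $k$-Hessenberg shape, and each subsequent $\mathcal G_{k+i}$ creates a single outermost bulge that is immediately removed by one column rotation $\tilde{\mathcal G}_i$, producing $\tilde H$ directly. Your route instead localises to the block $L=B(:,k+1:n)$, recognises it as lower triangular, and analyses $HL$ via the Nullity Theorem; this is more structural and explains \emph{why} a single Hessenberg factor suffices, whereas the paper simply exhibits the rotations.

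There is, however, a small but real gap in your opposite-orientation case. Your stated bound $\mathrm{rank}\,(HL)(1:i,\,i+1:n-k)\le 1$, derived from $\mathrm{rank}\,H(1:i,\,i+1:n)\le 1$, is one column short of what the bottom-to-top sweep actually needs. When you rotate columns $(j,j+1)$ you must clear column $j+1$ all the way through row $j$, which requires the two current columns to be proportional on rows $1:j$, not merely on rows $1:j-1$; with only the bound you wrote, the induction leaves one superdiagonal entry in each column and produces a $1$-lower-Hessenberg block rather than a lower-triangular one, so the resulting $\tilde B$ fails to be $k$-lower Hessenberg. The repair is immediate: for upper Hessenberg $H$ the complementary block $H(i+1:n,\,1:i-1)$ vanishes identically, so Lemma~\ref{l1} in fact yields the sharper estimate $\mathrm{rank}\,H(1:i,\,i:n)\le 1$ and hence $\mathrm{rank}\,(HL)(1:i,\,i:n-k)\le 1$. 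With this one-column sharpening your induction goes through exactly as you outline.
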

\begin{proof}
We prove the Lemma only in the case $H$ is lower Hessenberg and $B$ is $k$-upper Hessenberg. 
	We need to move each of the $n-1$ Givens rotations of $H$  on the right of $B$. The first $k$ Givens rotations of $H$, namely ${\mathcal G}_{1}, \ldots, {\mathcal G}_k$, when applied to  $B$ do not destroy the $k$-lower Hessenberg structure of $B$, so that ${\mathcal G}_k {\mathcal G}_{k-1}\cdots {\mathcal G}_1B=\hat B$ still $k$-lower Hessenberg. 
	When we apply ${\mathcal G}_{k+1}$ to $\hat B$ a bulge is produced in position $(k+2, 1)$, and we need to apply a rotation on the first two columns of ${\mathcal G}_{k+1}\hat B$ to remove the bulge, i.e. ${\mathcal G}_{k+1}\hat B=\hat B_1 \tilde {\mathcal G}_1$, similarly we can remove each of the remaining $n-k-1$ Givens rotations. At step $i$ we have
	${\mathcal G}_{k+i}\hat B_{i-1}=\hat B_i \tilde {\mathcal G}_i$. The last Givens ${\mathcal G}_{n-1}$ produces a bulge in position $(n,  n-k-1)$ which can be removed by the rotation $\tilde  {\mathcal G}_{n-k-1}$ acting on the columns $(n-k-1, n-k)$. We do not need to rotate the columns with indices between $n-k$ and $n$, so that $$\tilde H=\tilde {\mathcal G}_{n-k-1}\cdots \tilde {\mathcal G}_2 \tilde {\mathcal G}_1=\begin{bmatrix}
	\hat H& \\ &	I_k 
	\end{bmatrix}.
	$$ 
	We can similarly prove the remaining three cases.
\end{proof}

The reduction of $ \widehat A= \widehat A_0$ in  Hessenberg form proceeds in three steps according to Theorem \ref{teo:rep1}.
The first two steps amount to determine a different representation of the same matrix  $\widehat A_0$.
In particular after these two steps the rank-correction inside the brackets is confined to the first $k$-rows, while the $L_0$  factor on the left of the representation is substituted by a factor which is proper, and  still with the lower $k$-Hessenberg structure.
 The third step is a
bulge-chasing  scheme to complete the  Hessenberg reduction. 

\begin{enumerate}
\item(\emph{QR decomposition of $Y_0$}) We compute the  full QR  factorization of $Y_0=Q_0T_0$.  Since $Y_0$ is full rank the matrix $\hat T_0=T_0(1:k, :)$ is invertible   and, moreover, the matrix $Q_0$ can be taken as a $k$-lower Hessenberg proper matrix (see Lemma~2.4 of~\cite{BDG}).  We  can write
  \[
  \widehat A_0=(L_0 Q_0)\cdot ( Q_0^H \widehat U+ T_0 W_0^H)\cdot R_0.
  \]
  Then the matrix $\widehat A_1\colon =L_0^H \widehat A_0 L_0$ is such that
  \[
  \widehat A_1=Q_0 \cdot  ( Q_0^H \widehat U R_0+ T_0 W_0^HR_0) L_0.
  \]
  Notice that  $ \widehat U_1:=Q_0^H \widehat U R_0$ is  a unitary $2k$-upper   Hessenberg matrix.
  Indeed,  we have that 
  $$
  \widehat U_1=Q_0^H\widehat UQ_0 Q_0^HR_0=(I_m-\hat X\hat X^H) Q_0^HR_0,
  $$
  where $\hat X:=Q_0^H X$ and  $\hat X(2k+1:m,:)=-Q_0^H(2k+1:m, 1:k)G^H=0$ since $Q_0^H(2k+1:m, 1:k)=0$.
  Therefore, it holds  $\widehat U_1=((I_{2k}-\hat X(1:2k,:)\hat X^H(:,1:2k))\oplus I_{m-2k})Q_0^HR_0$ which, for the block diagonal structure of $I_m-\hat X\hat X^H$, turns out to be $2k$-upper Hessenberg.
\item(\emph{Block decomposition of  $\widehat U_1$}) We compute the full  QR  factorization of $\widehat U_1^H(:, 1:k)$.
  Specifically we  determine a unitary  matrix $P$ such that $\widehat U_1( 1:k, :) P=\left[I_k, 0\right]$, and such $P$ can be taken in $k$-lower Hessenberg form (see Lemma~2.4 of~\cite{BDG}).
  The matrix
  \begin{equation}
  \label{u1p}
  \widehat U_1 P=\left[\begin{array}{c|c}
  I_k & \\ \hline & U_1(k+1:m, :)P(:, k+1:m)
  \end{array}\right]=\left[\begin{array}{c|c}
I_k & \\ \hline & \hat Q
    \end{array}\right] 
\end{equation}
    where $\hat Q$ is   a unitary  $k$-upper   Hessenberg matrix, due to the fact that  $U_1(k+1:m, :)$ is $k$-upper Hessenberg and $P(:, k+1:m)$ is lower triangular.
  We obtain that
  \[
  \widehat A_1=Q_0 \cdot  ( \widehat U_1P+ T_0 W_0^HR_0P)P^H L_0,
  \]
  which gives
  \[
  \widehat A_2= L_0\widehat A_1 L_0^H=\widehat A_0=(L_0 Q_0)\cdot  ( \widehat U_1P+ T_0 W_0^H R_0P)P^H.
\]
Applying $k$ times Lemma~\ref{lem:fact}, observing that $L_0$ is $k-$banded (i.e. simultaneously $k$-upper and $k$-lower Hessenberg)  we can factorize $L_0Q_0=Q_1 L_1$ where
  $Q_1$ is  a unitary  $k$-lower    Hessenberg matrix and  $L_1=\left[\begin{array}{c|c}
I_k & \\ \hline & \hat L_1
   \end{array}\right]$  where $\hat L_1$ is a unitary  $k$-upper     Hessenberg matrix. 
    It follows that
\begin{equation} \label{A1}
  \widehat A_0= Q_1 \cdot (L_1\widehat U_1P+ T_0 W_0^HR_0P)P^H=Q_1(\widehat U_2+ (e_1\otimes I_k)W_1^H )P^H.
\end{equation}
Where the matrix $\widehat U_2:=L_1\widehat U_1P$ satisfies $\widehat U_2=\left[\begin{array}{c|c}
I_k & \\ \hline & \tilde U_2
    \end{array}\right]$ where $\tilde U_2$ is a unitary  $2k$-upper Hessenberg matrix, and $W_1:=P^HR_0^HW_0\hat T_0^H$, where $\hat T_0=T(1:k, 1:k)$. Observe that
  $Q_0(n+1:m,1:k)=Q_1(n+1:m,1:k)$  and, moreover $Q_0(n+1:m,1:k)$ is nonsingular, because $Q_0$ is proper. 
  From Lemma \ref{l1}  this implies the properness of $Q_1$.   This  property is maintained  in the subsequent
  steps of the reduction
  process so that the final  matrix is guaranteed to be proper as  prescribed  in Theorem \ref{teo:rep1}.
  
  At the end of this step the enlarged matrix $\widehat A$  has been reduced to a product of a proper $k$-lower Hessenberg matrix $Q_1$ on the left, a unitary factor corrected in the first $k$ rows i.e., the term inside the brackets, and a $k$-upper Hessenberg matrix, i.e., $P^H$. Step 3 consists of the reduction of $\hat U_2$ to Hessenberg form so that the final matrix will be unitarly  similar to $\widehat{A}$ and in the $LFR$ format.
\item(\emph{Hessenberg reduction of  $\hat U_2$})  We now need to work on the representation of   $\widehat{A}_0$ in equation~\eqref{A1} to reduce the inner matrix $\widehat U_2$ in Hessenberg form  by means of a bulge-chasing  procedure. Indeed Theorem~\ref{teo:rep1} ensures that the matrix obtained will be in the LFR format and in Hessenberg form. These transformations will not affect the properness of the $k$-lower Hessenberg term on the left.

For the sake of illustration let us
consider the first  step.  Let us determine a unitary  upper Hessenberg matrix $\mathcal G_1 \in \mathbb C^{2k\times 2k}$ such that
\[
\mathcal G_1 \tilde U_2(2:2k+1, 1)=\alpha e_1.
\]
Then setting  $G_1=(I_{k+1}\oplus {\mathcal G}_1\oplus I_{n-2k-1})$,   we have  
$$
\widehat A_0=Q_1 G_1^H (G_1\widehat U_2+(e_1\otimes I_k)W_1^H )P^H.
$$
  
The application of $G_1^H$ on the right of the matrix $Q_1$  by computing  $Q_1(:, k+2:3k+1)\mathcal G_1^H$  creates a bulge  formed by an additional segment above the last  nonzero  superdiagonal of $Q_1$.  This segment can be annihilated by a novel unitary  upper Hessenberg matrix  $ G_2$ whose active part $\mathcal{G}_2  \in \mathbb C^{2k\times 2k}$ works on  the left of
  $Q_1(:, k+2:3k+1)\mathcal G_1^H$  by acting on the rows  of indices 2 through  $2k+1$.  
We can then apply a similarity transformation to remove the bulge
 \[
 G_2\widehat A_0G_2^H= Q_2 (G_1\widehat U_2+(e_1\otimes I_k)W_1^H )P^HG_2^H,
 \]
 where $Q_2:=G_2Q_1 G_1^H $.
The active part of $G_2^H$, the $2k\times 2k$ matrix $\mathcal G_2^H$, acts  on the right of $P^H$   producing a bulge  which can  be zeroed  by a unitary  upper Hessenberg matrix $\mathcal G_3 \in \mathbb C^{2k\times 2k}$ working on rows from $k+2$ to $3k+1$ of $P^HG_2^H$. 
Then,  the matrix
  \[
  \widehat U_2\leftarrow G_1   \widehat U_2 (I_{k+1}\oplus \mathcal G_3^H\oplus I_{n-2k-1})
  \]
  has a bulge  on the rows  of indices $2k+2$ through  $4k+1$
  which can be chased away by a sequence of $O(n/k)$ transformations having the same structure as above.
  Note that the rank correction of the unitary matrix inside the brackets is never affected by these transformations so that, at the end of the process, we have unitarily reduced $A_0$ to the LFR format in Definition~\ref{lfr}. Also the zeros in the last $k$ rows are preserved.  
\end{enumerate}

The cost analysis is rather standard for matrix algorithms based on chasing operations \cite{R_book}.
\begin{enumerate}
\item Step 1 requires to compute the economic QR decomposition of a matrix of size $(n+k)\times k$  and to multiply a unitary
  $k-$Hessenberg matrix  specified as product of $k$ unitary Hessenberg matrices
  by $k$ vectors of size $n+k$.  The total cost is
  $O(nk^2)$ ops.
\item  The cost of  Step 2 is asymptotically the same.  The construction of the  factored representation of $\hat Q$ as well as
  the computation of  $L_1$ and $Q_1$  can still  be performed  using $O(nk^2)$ ops.
\item   The dominant cost  is the  execution of   Step 3. The zeroing of the sub-subdiagonal  entries costs
  $O(n \frac{n}{k}k^2)=O(n^2k)$ ops.
\end{enumerate}
In the next section we provide algorithmic details and discuss  the results of numerical experiments  confirming the effectiveness and the robustness of our proposed approach.

\section{Numerical Results}\label{four}
The structured Hessenberg reduction scheme described in the previous section
has been implemented using  MATLAB for numerical testing.
The resulting algorithm basically  amounts to manipulate chains of unitary
 Hessenberg matrices.

At step 1 of the structured Hessenberg reduction scheme   we  first compute the full QR factorization of the matrix
$Y_0\in \mathbb C^{m\times k}$.  The matrix $Q_0^H$ turns out to be  the product of $k$ unitary upper Hessenberg matrices.
Then we  have to incorporate the   unitary matrix  $\mathcal S:=I_{2k}-\hat X(1:2k,:)\hat X^H(:,1:2k)$ on the
right into the factored representations of $Q_0^H$ and $R_0$.  The unitary $2k\times 2k$ matrix $\mathcal S$ can always be represented as the product of at most $k(2k-1)$ elementary unitary transformations of size $2\times 2$.
Once this factorization is computed, we have to add each of these single transformations,  one by one, on  the right to the factored representations of $Q_0^H$ and $R_0$. This is accomplished by a sequence of  turnover and fusion operations acting on the chains of elementary transformations in $Q_0^H$ and $R_0$ (see~\cite{Va11} for the detailed description of these operations on elementary transformations).

At the beginning of step 2 the matrix $\widehat U_1$ is a $2k$-upper Hessenberg matrix, and is essentially  determined by the product of two  unitary $k$-upper  Hessenberg
matrices that here we rename as  $\widehat U_1=\widehat P \widehat Q$.
To reshape this factorization in the desired form in equation~\eqref{u1p} we can apply $k$ times a  reasoning similar to the one done in Lemma~\ref{lem:fact} to move each elementary transformation  of  $\widehat Q$ on the left. 
 In this way we find $\widehat P \widehat Q=\widetilde Q \widetilde P$ where
$\widetilde Q=\left[\begin{array}{c|c}I_k & \\ \hline & \hat Q \end{array}\right]$ is the matrix appearing in~\eqref{u1p}.
Since  $\widehat Q$ is formed by $O(nk)$ elementary transformations  the reshaping costs  $O(nk^2)$  ops. With a similar reasoning we can
compute the representations of $Q_1$ and $L_1$ where $Q_1$ is $k$-lower Hessenberg and $L_1=\begin{bmatrix}
I_k & \\
& \hat L_1
\end{bmatrix}$, with $\hat L_1$ unitary $k$-upper Hessenberg.

The third phase  of the structured Hessenberg reduction scheme basically amounts to  reduce  the matrix $\widehat U_2=L_1\widetilde Q$ into
a matrix of the form $ \left[\begin{array}{c|c}
I_k & \\ \hline & \tilde U_2
    \end{array}\right] $, with $\tilde U_2$ $n\times n$ unitary
Hessenberg. To be specific assume that $L_1=L_{1,1} \cdots L_{1,k}$  and
$\widetilde Q=\widetilde Q_1 \cdots \widetilde Q_k$, where $L_{1,j}$ and $\widetilde Q_j$ are unitary upper Hessenberg matrices
with the leading principal submatrix of order $k$ equal to the identity matrix.    The  overall reduction process
splits into $n$ intermediate steps. At each step the first active  elementary transformations of
$\widetilde Q_{k}, \ldots,\widetilde Q_1,  L_{1,k},\ldots, L_{1,1}$ are annihilated (in this order). Each transformation is moved on the
left by creating a bulge in the leftmost factor $Q_1$.  This bulge is removed by applying a similarity transformation.

Let us consider the first step. Let $L_{1,i}={\mathcal G}_{k+1}^{(i)}\cdots {\mathcal G}_{m-1}^{(i)}D_m^{(i)}$ denote the Schur parametrization of $L_{1,i}$ and similarly let $\widetilde Q_i={\mathcal H}_{k+1}^{(i)}\cdots {\mathcal H}_{m-1}^{(i)}E_m^{(i)}$ that of $\widetilde Q_i$. At this step we move left the first elementary transformations of each factor of the product $L_1\widetilde Q$, for example when moving the rotation ${\mathcal H}_{k+1}^{(k)}$ in front of $L_1$ the resulting transformation acts on rows $3k$ and $3k+1$ while some of the  rotations in $L_1$ and $\tilde Q$ have changed. The final situation is as follows\footnote{As observed, we can use only a unitary diagonal matrix to keep track of all the diagonal contributions.}
\begin{align*}
L_1\widetilde Q&=({\mathcal G}_{k+1}^{(1)}\cdots \hat {\mathcal G}_{m-1}^{(1)})\cdots ({\mathcal G}_{k+1}^{(k)}\cdots \hat {\mathcal G}_{m-1}^{(k)})
( {\mathcal H}_{k+1}^{(1)}\cdots \hat  {\mathcal H}_{m-1}^{(1)})\cdots ( {\mathcal H}_{k+1}^{(k)}\cdots \hat  {\mathcal H}_{m-1}^{(k)})D=\\
&=\underbrace{(\tilde {\mathcal H}_{3k}^{(k)}\tilde {\mathcal H}_{3k-1}^{(k-1)}\cdots \tilde {\mathcal H}_{2k+1}^{(1)} \tilde {{\mathcal G}}_{2k}^{(k)}\cdots \tilde{\mathcal G}_{k+2}^{(2k)} )}_{B} \underbrace{\tilde {\mathcal G}_{k+1}^{(1)}\tilde{L}_{1, 1}\cdots \tilde{L}_{1,k} \hat{Q}_{1}\cdots \hat{Q}_{k} D}_{\hat U_2},
\end{align*} 
where
$$
\tilde{L}_{1, j}=\tilde {\mathcal G}_{k+2}^{(j)}\cdots \tilde {\mathcal G}_{m-1}^{(j)} \quad \mbox{ and } \quad \hat{Q}_{j}=\tilde  {\mathcal H}_{k+2}^{(j)}\cdots \tilde  {\mathcal H}_{m-1}^{(j)}.
$$
At this point we bring the bulge $B$ on the left of $Q_1$ in equation~\eqref{A1} obtaining 
$$\widehat A_0= \widehat B \breve Q_1( \widehat U_2 + T_0W_0^HR_0P)P^H,
$$
where $\widehat B=\Gamma_{2k}\cdots \Gamma_2$ is the product of a sequence of elementary transformations in ascending order acting on rows $2:2k$. The bulge $\widehat B$ is removed 
by chasing an elementary transformation at a time. 
For example to remove $\Gamma_{2k}$ we apply the similarity transformation
$\Gamma_{2k}^H  \widehat B \breve Q_1(  + T_0W_0^HR_0P)P^H \, \Gamma_{2k}$  that will shift down the bulge of $2k$ positions. So $O(n/k)$ chasing step will be necessary to get rid of that first transformation. 
%
In this way the
overall process is completed using $O(nk \cdot k\cdot n/k)=O(n^2k)$ ops. Note that the whole  similarity transformation acts only on the first $n$ rows leaving untouched  the null rows at the bottom of $\widehat A$ in equation~\eqref{widehatA}.

Numerical experiments have been performed to confirm the computational prop\-er\-ties of the proposed method. Among the
three cases considered in Section \ref{one} the  last one, when the unitary part is block diagonal,  is the most challenging since
computing the  starting LFR format costs $O(n^2k)$ vs the $O(nk^2)$ flops sufficient for the first two cases. 
The CMV reduction of the input unitary diagonal plus rank$-k$ matrix  $D+UV^H$
is  computed  using the algorithm  presented in  \cite{GR}  which is fast and
backward stable.
 Our tests focus on the numerical performance of the  Hessenberg reduction scheme provided
 in the previous section given the  factors $L,R$ and $Z$  satisfying  Theorem  \ref{block-diag-red}.
 In the next tables we show the backward
errors  $\epsilon_P$, $\epsilon_B$ and $\epsilon_H$ generated by our procedure.  These errors are defined as follows:
\begin{enumerate}
\item $\epsilon_P$ is  the error computed at the end of the first two {\em preparatory} steps.  Given the matrix
  $A$  of size  $n$ represented  as in Theorem \ref{theo:embedding} we  find  the matrix $\widehat A$ of size $m=n+k$
  obtained at the end of
  step 2.  Denoting by $fl(\widehat A)$ the computed  matrix, the error is
  \[
  \epsilon_P:=\frac{\| A-fl(\widehat A(1\colon n, 1\colon n))\|_2}{mk\| A\|_2}.
  \]
\item  $\epsilon_B$ is the classical {\em backward } error  generated in the  final step given by
  \[
  \epsilon_B:=\frac{\| H- Qfl(\widehat A) Q^H\|_2}{mk\| A\|_2}, 
  \]
  where $H$ is the matrix computed by multiplying all the factors obtained at the end of the third step,   and
  $Q$ is the product of the  unitary transformations acting by similarity  on the left and on the right  of the matrix
  $fl(\widehat A)$ in the Hessenberg reduction phase.
\item  $\epsilon_H$ is  used to  measure the Hessenberg structure of the matrix $H$. It is
  \[
  \epsilon_H:=\frac{\| {\tt tril}(H,-2)\|_2}{mk\| A\|_2},
  \]
  where ${\tt tril}(X,K)$ is the  matrix  formed by the elements on and below the $K$-th diagonal of $X$.
   \end{enumerate}
Next tables report these errors  for different values of $n,k$ and $\| A\|_2$.

\begin{table} 
  \begin{center}
  \begin{tabular}{ccccc}
    n & $\parallel A\parallel_2$ & $\epsilon_P$ &  $\epsilon_B$ &  $\epsilon_H$  \\\hline\hline
    32 & 8.2e+01& 2.2e-17& 3.9e-17&4.3e-19\\
    64 & 1.4e+02& 1.5e-17& 5.2e-17& 5.1e-19\\
    128 & 2.7e+02& 7.7e-18& 6.0e-17& 2.0e-19\\
    256 &5.2e+02& 5.5e-18& 1.3e-16& 1.4e-19 \\
    512 &1.0e+03& 3.2e-18& 2.2e-16& 1.4e-19 \\ 
  \end{tabular}
  \caption{Backward errors for random matrices with $k=2$}
  \label{T12} 
  \end{center}
\end{table}

\begin{table} 
  \begin{center}
  \begin{tabular}{ccccc}
    n & $\parallel A\parallel_2$ & $\epsilon_P$ &  $\epsilon_B$ &  $\epsilon_H$  \\\hline\hline
    32 & 7.6e+04& 1.2e-17& 4.9e-17&7.0e-22\\
    64 & 6.0e+05& 1.3e-17& 5.7e-17& 2.1e-22\\
    128 & 4.5e+06& 5.5e-18& 7.6e-17& 6.6e-24\\
    256 &3.6e+07& 6.6e-18& 1.3e-16& 1.5e-24\\
    512 &2.7e+08& 2.3e-18& 2.2e-16& 2.6e-25 \\ 
  \end{tabular}
  \caption{Backward errors for random  matrices of large norm  with $k=2$}
  \label{T22} 
  \end{center}
\end{table}

\begin{table} 
  \begin{center}
  \begin{tabular}{ccccc}
    n & $\parallel A\parallel_2$ & $\epsilon_P$ &  $\epsilon_B$ &  $\epsilon_H$  \\\hline\hline
    64 & 1.5e+02& 7.4e-18& 4.4e-17& 2.3e-18\\
    128 & 2.9e+02& 3.2e-18& 5.6e-17& 1.2e-18\\
    256 &5.5e+02& 2.5e-18& 9.6e-17& 4.1e-19 \\
    512 &1.1e+03& 1.8e-18& 1.6e-16& 5.0e-19 \\ 
  \end{tabular}
  \caption{Backward errors for random matrices with $k=4$}
  \label{T14} 
  \end{center}
\end{table}

\begin{table} 
  \begin{center}
  \begin{tabular}{ccccc}
    n & $\parallel A\parallel_2$ & $\epsilon_P$ &  $\epsilon_B$ &  $\epsilon_H$  \\\hline\hline
    64 & 6.2e+05& 6.6e-18& 5.2e-17& 5.3e-18\\
    128 & 4.9e+06& 4.5e-18& 6.8e-17& 1.8e-18\\
    256 &3.8e+07& 2.2e-18& 9.2e-17& 5.5e-19 \\
    512 &2.9e+08& 2.3e-18& 1.6e-16& 8.0e-19 \\ 
  \end{tabular}
  \caption{Backward errors for random matrices of large norm  with $k=4$}
  \label{T24} 
  \end{center}
\end{table}

The  results of Table \ref{T12},\ref{T22},\ref{T14} and \ref{T24}  show  that the
proposed algorithm is
numerically backward stable.

In order to confirm  the cost analysis of the algorithm
we have also performed
experiments taking   fixed the size of the matrix.  For
matrices of size $512$ with $k$ varying from 2 to 16 we obtain that
the measures of  elapsed time  $t_k$ satisfy
\[
\frac{t_4}{t_2}=2.34, \quad \frac{t_8}{t_4}=2.16, \quad \frac{t_{16}}{t_8}=2.08.
\]
This illustrates the linear growth of the cost with respect to $k$, the size of the
perturbation.

\section{Conclusions and Future Work}\label{sec:conclusions} 

In  this paper we have presented a novel algorithm for the reduction
in Hessenberg form of a  unitary  diagonal plus rank$-k$ matrix.
By exploiting the rank structure of the input matrix  this algorithm achieves
computational efficiency both with respect to the size of the matrix and the size of the perturbation
as well as numerical accuracy.   The algorithm
complemented  with the structured  QR iteration described in \cite{BDG} yields
a fast and accurate  eigensolver for unitary plus low rank matrices.

\bibliographystyle{siamplain}

\begin{thebibliography}{10}

\bibitem{ACL}
{\sc A.~Amiraslani, R.~M. Corless, and P.~Lancaster}, {\em Linearization of
  matrix polynomials expressed in polynomial bases}, IMA J. Numer. Anal., 29
  (2009), pp.~141--157, \url{https://doi.org/10.1093/imanum/drm051},
  \url{http://dx.doi.org/10.1093/imanum/drm051}.

\bibitem{AG}
{\sc P.~Arbenz and G.~H. Golub}, {\em On the spectral decomposition of
  {H}ermitian matrices modified by low rank perturbations with applications},
  SIAM J. Matrix Anal. Appl., 9 (1988), pp.~40--58,
  \url{https://doi.org/10.1137/0609004},
  \url{http://dx.doi.org/10.1137/0609004}.

\bibitem{arli}
{\sc Y.~Arlinski{\u\i}}, {\em Conservative discrete time-invariant systems and
  block operator {CMV} matrices}, Methods Funct. Anal. Topology, 15 (2009),
  pp.~201--236.

\bibitem{R_book}
{\sc J.~{Aurentz}, T.~{Mach}, L.~{Robol}, R.~{Vandebril}, and D.~S. {Watkins}},
  {\em Core-chasing algorithms for the eigenvalue problem}, Fundamentals of
  Algorithms, SIAM, 2018.

\bibitem{AMRVW_block}
{\sc J.~Aurentz, T.~Mach, L.~Robol, R.~Vandebril, and D.~S. Watkins}, {\em Fast
  and backward stable computation of eigenvalues and eigenvectors of matrix
  polynomials}, Math. Comp., 88 (2019), pp.~313--347,
  \url{https://doi.org/10.1090/mcom/3338},
  \url{https://doi.org/10.1090/mcom/3338}.

\bibitem{Austin}
{\sc A.~P. Austin, P.~Kravanja, and L.~N. Trefethen}, {\em Numerical algorithms
  based on analytic function values at roots of unity}, SIAM J. Numer. Anal.,
  52 (2014), pp.~1795--1821, \url{https://doi.org/10.1137/130931035},
  \url{https://doi.org/10.1137/130931035}.


\bibitem{noi}
{\sc R.~Bevilacqua, G.~M. Del~Corso, and L.~Gemignani}, {\em A {QR} based
  approach for the nonlinear eigenvalue problem}, Rendiconti Sem. Mat. Univ.
  Pol. Torino, 76 (2018), pp.~77--87.
  
 \bibitem{BDG}
{\sc R.~Bevilacqua, G.~M.~D. Corso, and L.~Gemignani}, {\em Fast {QR}
  iterations for unitary plus low rank matrices}, 2018,
  \url{https://arxiv.org/abs/arXiv:1810.02708}.
 

\bibitem{BR}
{\sc D.~A. Bini and L.~Robol}, {\em On a class of matrix pencils and
  $\ell$-ifications equivalent to a given matrix polynomial}, Linear Algebra
  Appl.,  (2015), \url{https://doi.org/10.1016/j.laa.2015.07.017}.

\bibitem{BGE}
{\sc A.~Bunse-Gerstner and L.~Elsner}, {\em Schur parameter pencils for the
  solution of the unitary eigenproblem}, Linear Algebra Appl., 154/156 (1991),
  pp.~741--778, \url{https://doi.org/10.1016/0024-3795(91)90402-I},
  \url{http://dx.doi.org/10.1016/0024-3795(91)90402-I}.

\bibitem{cantero2003five}
{\sc M.~J. Cantero, L.~Moral, and L.~Vel{\'a}zquez}, {\em Five-diagonal
  matrices and zeros of orthogonal polynomials on the unit circle}, Linear
  Algebra Appl., 362 (2003), pp.~29--56,
  \url{https://doi.org/10.1016/S0024-3795(02)00457-3},
  \url{http://dx.doi.org/10.1016/S0024-3795(02)00457-3}.

\bibitem{dam}
{\sc D.~Damanik, J.~Fillman, and R.~Vance}, {\em Dynamics of unitary
  operators}, J. Fractal Geom., 1 (2014), pp.~391--425,
  \url{https://doi.org/10.4171/JFG/12}, \url{https://doi.org/10.4171/JFG/12}.

\bibitem{DB}
{\sc S.~Delvaux and M.~Van~Barel}, {\em Unitary rank structured matrices}, J.
  Comput. Appl. Math., 215 (2008), pp.~49--78,
  \url{https://doi.org/10.1016/j.cam.2007.03.020},
  \url{https://doi.org/10.1016/j.cam.2007.03.020}.

\bibitem{eidelman:book1}
{\sc Y.~Eidelman, I.~Gohberg, and I.~Haimovici}, {\em Separable type
  representations of matrices and fast algorithms. {V}ol. 1}, vol.~234 of
  Operator Theory: Advances and Applications, Birkh\"auser/Springer, Basel,
  2014.
\newblock Basics. Completion problems. Multiplication and inversion algorithms.

\bibitem{eidelman:book2}
{\sc Y.~Eidelman, I.~Gohberg, and I.~Haimovici}, {\em Separable type
  representations of matrices and fast algorithms. {V}ol. 2}, vol.~235 of
  Operator Theory: Advances and Applications, Birkh\"auser/Springer Basel AG,
  Basel, 2014.
\newblock Eigenvalue method.

\bibitem{FM}
{\sc M.~Fiedler and T.~L. Markham}, {\em Completing a matrix when certain
  entries of its inverse are specified}, Linear Algebra Appl., 74 (1986),
  pp.~225--237, \url{https://doi.org/10.1016/0024-3795(86)90125-4},
  \url{http://dx.doi.org/10.1016/0024-3795(86)90125-4}.

\bibitem{GR}
{\sc L.~Gemignani and L.~Robol}, {\em Fast {H}essenberg reduction of some rank
  structured matrices}, SIAM J. Matrix Anal. Appl., 38 (2017), pp.~574--598,
  \url{https://doi.org/10.1137/16M1107851},
  \url{https://doi.org/10.1137/16M1107851}.

\bibitem{ito}
{\sc S.~Ito and Y.~Nakatsukasa}, {\em Stable polefinding and rational
  least-squares fitting via eigenvalues}, Numer. Math., 139 (2018),
  pp.~633--682, \url{https://doi.org/10.1007/s00211-018-0948-4},
  \url{https://doi.org/10.1007/s00211-018-0948-4}.

\bibitem{killip2007cmv}
{\sc R.~Killip and I.~Nenciu}, {\em C{MV}: the unitary analogue of {J}acobi
  matrices}, Comm. Pure Appl. Math., 60 (2007), pp.~1148--1188,
  \url{https://doi.org/10.1002/cpa.20160},
  \url{http://dx.doi.org/10.1002/cpa.20160}.

\bibitem{simon}
{\sc B.~Simon}, {\em C{MV} matrices: five years after}, J. Comput. Appl. Math.,
  208 (2007), pp.~120--154, \url{https://doi.org/10.1016/j.cam.2006.10.033},
  \url{http://dx.doi.org/10.1016/j.cam.2006.10.033}.

\bibitem{sinap2}
{\sc A.~Sinap and W.~Van~Assche}, {\em Orthogonal matrix polynomials and
  applications}, in Proceedings of the {S}ixth {I}nternational {C}ongress on
  {C}omputational and {A}pplied {M}athematics ({L}euven, 1994), vol.~66, 1996,
  pp.~27--52, \url{https://doi.org/10.1016/0377-0427(95)00193-X},
  \url{https://doi.org/10.1016/0377-0427(95)00193-X}.

\bibitem{sinap1}
{\sc W.~Van~Assche and A.~Sinap}, {\em Orthogonal matrix polynomials on the
  unit circle and applications}, in Orthogonal polynomials on the unit circle:
  theory and applications ({M}adrid, 1994), Univ. Carlos III Madrid,
  Legan\'{e}s, 1994, pp.~159--171.

\bibitem{Va11}
{\sc R.~Vandebril}, {\em Chasing bulges or rotations? {A} metamorphosis of the
  {QR}-algorithm}, SIAM J. Matrix Anal. Appl., 32 (2011), pp.~217--247,
  \url{https://doi.org/10.1137/100809167},
  \url{https://doi.org/10.1137/100809167}.

\bibitem{vanbarel:book1}
{\sc R.~Vandebril, M.~Van~Barel, and N.~Mastronardi}, {\em Matrix computations
  and semiseparable matrices. {V}ol. 1}, Johns Hopkins University Press,
  Baltimore, MD, 2008.
\newblock Linear systems.

\bibitem{vanbarel:book2}
{\sc R.~Vandebril, M.~Van~Barel, and N.~Mastronardi}, {\em Matrix computations
  and semiseparable matrices. {V}ol. {II}}, Johns Hopkins University Press,
  Baltimore, MD, 2008.
\newblock Eigenvalue and singular value methods.

\end{thebibliography}

\end{document}